\newcommand{\normal}{\color{black}}
\theoremstyle{plain}
\newtheorem{thm}{Theorem}[section]
\newtheorem{prop}{Proposition}[section]
\newtheorem{cor}{Corollary}[section]
\theoremstyle{definition}
\theoremstyle{remark}
\newtheorem{rem}{Remark}[section]
\renewcommand{\star}{\circledast}
\newcommand{\rdd}{{\mathbb{R}^d}}
\newcommand{\I}{\mathds{1}}
\renewcommand{\Re}{\mathbb{R}}
\newcommand{\Ff}{\mathcal{F}}
\newcommand{\prt}{\partial}
\newcommand{\be}{\begin{equation}}
\newcommand{\ee}{\end{equation}}
\newcommand{\ba}{\begin{aligned}}
\newcommand{\ea}{\end{aligned}}
\newcommand{\real}{\mathds{R}}
\newcommand{\Ee}{\mathds{E}}
\begin{document}

\title{Accuracy of discrete approximation for integral functionals of Markov processes}

\author{Iu. V. Ganychenko}
\address{Department of Probability Theory, Statistics and Actuarial Mathematics, Kyiv National Taras Shevchenko University, Kyiv 01601, Ukraine}
\email{iurii\_ganychenko@ukr.net}
\author{V. P. Knopova}
\address{V.M.Glushkov Institute of Cybernetics NAS of Ukraine, Acad. 40, Glushkov Ave.,  Kyiv 03187, Ukraine}
\email{vicknopova@googlemail.com}
\author{A. M. Kulik}
\address{Institute of Mathematics, Ukrainian National Academy of Sciences, 01601 Tereshchenkivska str. 3, Kyiv, Ukraine}
\email{kulik@imath.kiev.ua}


\subjclass[2010]{60H07, 60H35}

\keywords{Markov process, integral functional, approximation rate.}

\maketitle

\begin{abstract}
    \noindent The article is devoted to the estimation of the rate of convergence of integral functionals of a Markov process. Under the assumption that the given Markov process admits a transition probability density which is differentiable in $t$ and the derivative has an integrable upper bound of a certain type, we derive the accuracy rates for strong and weak   approximations of the functionals by  Riemannian sums. Some examples are provided.

\end{abstract}


\section{Introduction} Let $X_t$, $t\geq 0$, be a  Markov process   with values in  $\mathbb{R}^d$.  Consider an integral functional of the form
\begin{equation}\label{IT}
I_T(h)=\int_0^Th(X_t)\, dt,
\end{equation}
where $h:\Re^d\to \Re$ is a given measurable function. In this paper we investigate the  accuracy of the approximation of $I_T(h)$ by the Riemannian  sums
$$
I_{T,n}(h)={T\over n}\sum_{k=0}^{n-1}h(X_{(kT)/n}),\quad n\geq 1.
$$
The function $h$ is assumed to be bounded, only; i.e.,  we do not impose any regularity assumptions on $h$. In particular, under this  assumption the class of integral functionals  which we investigate contains the class of  \emph{occupation time} type functionals (for which $h=1_A$ for a fixed $A\in \mathcal{B}(\Re^d)$), which are of particular importance.

Integral functionals arise naturally in a wide class  of stochastic representation formulae and applied stochastic models. It is very typical that exact calculation of the respective probabilities and/or expectations is hardly possible, which naturally  suggests the usage of the approximation methods. As an  example of such a situation, we mention the so-called \emph{occupation time option} \cite{Linetsky}, whose price is actually given by an expression similar to the Feynman-Kac formula. The exact calculation of the price is possible only in the particular case when the underlying process is a L\'evy process which is ``spectrally negative'' (i.e. does not have positive jumps, see \cite{Guerin}), and practically more realistic cases of general L\'evy processes, solutions to L\'evy driven SDE's, etc. can be treated  only numerically. To estimate the rate of convergence of the respective Monte-Carlo approximative methods, one needs to estimate the accuracy of various approximation steps involved in the algorithm. In this paper we  focus
on solving of such a problem for the discrete approximation of the integral functional  of type \eqref{IT}.

For diffusion processes, this problem was studied in \cite{Gobet} and recently in \cite{Kohatsu-Higa}, by means of the methods involving the particular structural features of the process, e.g. the Malliavin calculus tools. On the other hand, in two recent papers \cite{kul-gan}, \cite{kul-gan2},  an alternative method is developed, which exploits only the basic Markov structure of the process and  the additive structure of the integral functional and its discrete approximations.  One  of the aims of this paper is to extend this method for a wider class of Markov processes. To explain our goal more  in details, let us formulate our principal assumption on the process $X$.

\begin{itemize}
  \item[\textbf{X.}]  The transition probability $P_t(x,dy)$ of $X$ admits a density $p_t(x,y)$ w.r.t. the Lebesgue measure on $\Re^d$. This density  is differentiable w.r.t. $t$, and its derivative possesses the bound
\be\label{der_bound}
\Big|\prt_tp_t(x,y)\Big|\leq B_{T,X}t^{-\beta} q_{t,x}(y), \quad t\leq T,
\ee
for some  $B_{T,X}\geq 1, \beta\geq 1$, and measurable function $q$, such that for each fixed $t,x$ the function $ q_{t,x}(\cdot)$ is a distribution density.
\end{itemize}

In \cite{kul-gan}, \cite{kul-gan2}, the condition similar to \textbf{X} was formulated with $\beta=1$. Such a condition is verified for the particularly important classes of diffusion process and symmetric $\alpha$-stable processes. However, in some natural cases one can expect to get \eqref{der_bound} only with $\beta>1$. As  the simplest and the most illustrative example one can take an $\alpha$-stable process with drift:
$$X_t=ct+Z_t,
$$ where $c\not=0$ and $Z$ is a (e.g. symmetric) $\alpha$-stable process. Then
$$
p_t(x,y)=t^{-d/\alpha}g^{(\alpha)}\left(y-x-ct\over t^{1/\alpha}\right),
$$
where $g^{(\alpha)}$ denotes the distribution density of $Z_1$. Straightforward calculation shows that \eqref{der_bound} holds true with $\beta=\max(1, 1/\alpha)$, which is strictly greater than 1 when $\alpha<1$. Since the L\'evy noises  are now used extensively in various applied models, the  simple calculation made above shows that it is highly desirable to extend the results of \cite{kul-gan} and  \cite{kul-gan2},  which deal with the ``diffusive like'' class of processes satisfying \textbf{X} with $\beta=1$,  to the  more general case of \textbf{X} with arbitrary $\beta\geq 1$.

Another aim  of this paper is to develop  the tools which would allow us to get the bounds of the form \eqref{der_bound} for a  wider class of solutions of L\'evy driven SDEs. One result of such a type is given in the recent preprint \cite{KK14}, with the process $X$ being  a solution of the SDE
\be\label{SDE}
dX_t=b(X_t)\, dt+\sigma(X_{t-})\, dZ_t
\ee
 where $Z$ is  a symmetric  $\alpha$-stable process. The method used therein is a version of the parametrix method, and it is quite sensitive to the form of the L\'evy measure of the process $Z$ on the entire $\Re^d$. Recently, apart from the  stable noises, various types of ``locally stable'' noises are frequently used in applied models: tempered stable processes, damped stable processes, etc. Heuristically,   for a ``locally stable'' process its ``small jumps behavior'' is the same as for the stable one, but the ``large jumps behavior'' of the former is drastically different from ``tail behavior'' of the L\'evy measure. Since   \eqref{der_bound} is genuinely related to ``local behavior'' of the process, one can expect that the results of \cite{KK14} should have a natural extension to the case of  ``locally stable'' $Z$. However, to make  such a conjecture rigorous is a sophisticated problem; the main reason here is that the parametrix method treats the transition probability of a  L\'evy process as the  ``zero approximation'' for the unknown transition probability density $p_t(x,y)$, and hence any bound for $p_t(x,y)$, which one may expect to design within this method, is at least as complicated as respective bound for the process $Z$. On the other hand, there is an extensive  literature on the estimates of transition probability densities for L\'evy processes (e.g. \cite{BGR14},  \cite{KK12a}, \cite{K13}, \cite{KR15}, \cite{M12}, \cite{PT69},  \cite{RS10}, \cite{St10a}, \cite{St10b}, \cite{W07}; this list is far from being complete), which shows that these densities inherit the structure of the densities of the corresponding L\'evy measures. In particular, in order to get the exact two-sided bounds for $p_t(x,y)$ one should impose quite non-trivial structural assumptions on the ``tails'' of the L\'evy measure even in a comparatively simple ``locally stable'' case. Motivated by this observation on one hand, and by the initial approximation problem which suggests the condition  \eqref{der_bound} on the other hand, we  pose the following general question: \emph{Is it possible to give  a ``rough'' upper bound, which would be the same for a large class of  transition probability densities of ``locally stable processes'', without assuming complicated conditions on the ``tails'' of their L\'evy measures?} The answer is positive, and it roughly says that one can get the bound \eqref{der_bound}, where at the left hand side we have  the transition probability density of the SDE driven by a ``locally stable'' process, and   at the right hand side we have a (properly shifted) transition probability density of an $\alpha$-stable process. This bound is not necessarily  precise: the power-type ``tail'' of the $\alpha$-stable density might be essentially larger than the ``tail'' e.g. for \emph{exponentially tempered} $\alpha$-stable law. The gain is, however,  that under a mild set of assumptions we obtain a uniform-in-class upper bound, which is clearly easy to use in applications.   To keep the exposition reasonably compact, we treat this problem in a comparatively simple case of a one-dimensional SDE of the form  \eqref{sde1}, see below. The extension of these results to a  more general multidimensional case is much more technical,  and we postpone it to a separate publication.

The structure of the paper is the following. In Section \ref{s2} we formulate and prove two  our main results concerning the  accuracy of the \emph{strong}  and \emph{weak} approximations of an integral functional by  Riemannian sums, provided that condition \textbf{X} is satisfied.  In Section \ref{s3} we outline a version of the parametrix method, which makes it possible to obtain \eqref{der_bound} for solutions to L\'evy driven SDEs without strong structural assumptions on the ``tails'' of the L\'evy measure of the noise. In Section \ref{s4} an application for the price of an occupation time option is given.

\section{Accuracy of discrete approximation for integral functionals}\label{s2}

In this section we will prove two results. The first one concerns the ``strong approximation rate'', i.e. the control on  the $L_p$-distance between $I_{T}(h)$ and its approximation $I_{T,n}(h)$.

\begin{thm}\label{t1} Suppose that  \textbf{X} holds.  Then for any $p>0$
$$
\left(\Ee_x\Big|I_{T}(h)-I_{T,n}(h)\Big|^p\right)^{1/ p}\leq C_{T,p} \|h\| (D_{T,\beta} (n))^{1/2},
$$
where $\|h\|=\sup_x|h(x)|,$
\begin{equation}\label{DC}
D_{T,\beta} (n) =
\begin{cases}
n^{-1} \log n,& \beta=1,\\
\max\left(1,\frac{T^{1- \beta}}{\beta-1}\right) n^{-1/ \beta},& \beta>1,
\end{cases}\quad C_{T,p}=
\begin{cases}
(14p(p-1)B_{T,X})^{1/2} T,& p\geq 2,\\
C_{T,2}=(28 B_{T,X})^{1/2} T,& p\in (0,2).
\end{cases}
\end{equation}
\end{thm}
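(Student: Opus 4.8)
The plan is to write the error as $\xi\bydef I_T(h)-I_{T,n}(h)=\sum_{k=0}^{n-1}\zeta_k$ with $\zeta_k=\int_{t_k}^{t_{k+1}}\big(h(X_t)-h(X_{t_k})\big)\,dt$, $t_k=kT/n$, $\delta=T/n$, and to reduce the general $p$ to a second-moment estimate. For $p\in(0,2)$ Lyapunov's inequality gives $(\Ee_x|\xi|^p)^{1/p}\le(\Ee_x\xi^2)^{1/2}$, which is why $C_{T,p}=C_{T,2}$ there; for $p\ge2$ I would invoke a moment inequality for additive functionals of the Markov process (of the type used in \cite{kul-gan}, \cite{kul-gan2}), which bounds $\Ee_x|\xi|^p$ by the $p/2$-th power of the variance-type quantity below, up to the combinatorial factor producing the $p(p-1)$ in $C_{T,p}$. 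Thus everything reduces to proving $\Ee_x\xi^2\le C\,B_{T,X}\|h\|^2 T^2 D_{T,\beta}(n)$.

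For the variance I would split each block contribution into its conditional mean and a conditionally centered remainder, $\zeta_k=\psi(X_{t_k})+\tilde\zeta_k$, where $\psi(z)=\int_0^\delta\big(P_uh(z)-h(z)\big)\,du=\Ee_z\zeta_0$ and $\Ee_x[\tilde\zeta_k\mid\mathcal F_{t_k}]=0$. The centered parts are orthogonal (for $j<k$ one conditions on $\mathcal F_{t_k}$), and since $|\tilde\zeta_k|\le2\|h\|\delta$ their total contributes only $\Ee_x\big(\sum_k\tilde\zeta_k\big)^2\le4\|h\|^2\delta^2 n=O(n^{-1})$, dominated by $D_{T,\beta}(n)$. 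The main term is therefore $\Ee_x\big(\sum_k\psi(X_{t_k})\big)^2$, a quadratic form in the skeleton chain $\{X_{t_k}\}$. The core estimate is on the off-diagonal correlations $\Ee_x[\psi(X_{t_j})\psi(X_{t_k})]$, $j<k$: conditioning on $\mathcal F_{t_j}$ and using the Markov property turns this into $\Ee_x\big[\psi(X_{t_j})\,(P_{(k-j)\delta}\psi)(X_{t_j})\big]$, and the point is that $P_r\psi(z)=\int_0^\delta\big(P_{r+u}h(z)-P_rh(z)\big)\,du$ is an integral of semigroup increments, to which \eqref{der_bound} applies: $|P_{r+u}h(z)-P_rh(z)|\le\|h\|\int_r^{r+u}B_{T,X}v^{-\beta}\,dv\le\|h\|B_{T,X}\,u\,r^{-\beta}$ for $r\ge\delta$. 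This is the mechanism by which the derivative bound enters — conditioning manufactures the increments that \eqref{der_bound} controls, whereas $\psi$ or $P_rh$ individually admit only the trivial bound $2\|h\|$ (for $h=\mathbf 1_A$ with $z\in\partial A$ one sees that $|\psi|\le2\|h\|\delta$ cannot be improved pointwise).

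The hard part is extracting the exact rate $D_{T,\beta}(n)$. For each pair I have two competing bounds on $\Ee_x[\psi(X_{t_j})\psi(X_{t_k})]$: the trivial $(2\|h\|\delta)^2$ and the increment bound $\|h\|^2 B_{T,X}\delta^{3-\beta}(k-j)^{-\beta}$; the singularity of $v^{-\beta}$ at $v=0$ renders the derivative bound unusable for adjacent blocks, where only the trivial bound survives. Summing the trivial bound over small separations and the increment bound over large separations, and optimizing the crossover separation, produces $\int_\delta^T v^{-\beta}\,dv$-type quantities: for $\beta=1$ the crossover occurs at an $O(B_{T,X})$ number of blocks and the logarithmic integral yields the factor $n^{-1}\log n$, while for $\beta>1$ the crossover scale grows like a negative power of $\delta$ and the balance of the two sums gives the anomalous power $n^{-1/\beta}$ together with the constant $\max\!\big(1,\tfrac{T^{1-\beta}}{\beta-1}\big)$. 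Since $B_{T,X}\ge1$, the fractional powers of $B_{T,X}$ generated by the optimal cutoff can be absorbed into $B_{T,X}$ itself, which accounts for its linear appearance in $C_{T,p}$. Assembling these estimates bounds $\Ee_x\xi^2$, and the reduction of the first paragraph then yields the theorem.
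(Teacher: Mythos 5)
Your second paragraph and the optimization in your third are essentially correct, and they reproduce the paper's core mechanism in a different costume: your conditioning identity $\Ee_x[\psi(X_{t_j})\psi(X_{t_k})]=\Ee_x[\psi(X_{t_j})(P_{(k-j)\delta}\psi)(X_{t_j})]$ together with $|P_{r+u}h-P_rh|\le \|h\|B_{T,X}\,u\,r^{-\beta}$ is exactly how \eqref{der_bound} enters the paper, and your crossover scale $m\asymp n^{1-1/\beta}$ is precisely the paper's choice $k_{n,\beta}=[n^{1-1/\beta}]+1$ in the proof of Proposition \ref{p1} (the paper works with $\int_{\Re^d}|p_s(x,y)-p_{\eta_n(s)}(x,y)|\,dy$ directly rather than with a skeleton-chain quadratic form, but the arithmetic, including \eqref{beta1}, is the same). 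Two small slips are harmless: $|\tilde\zeta_k|\le 4\|h\|\delta$, not $2\|h\|\delta$; and the cross terms $\Ee_x[\psi(X_{t_j})\tilde\zeta_k]$ vanish only for $j\le k$, so you need $(a+b)^2\le 2a^2+2b^2$ before splitting. With these repairs your variance bound holds and, via Lyapunov, settles $p\in(0,2]$.

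The genuine gap is the case $p>2$. The ``moment inequality for additive functionals'' you invoke --- taking the variance as input and returning its $p/2$-th power with a factor $p(p-1)$ --- does not exist as a black box, and it is not what \cite{kul-gan} supplies. The actual engine there, and in this paper, is the Newton--Leibnitz identity $|J_{T,n}(h)|^p=p(p-1)\int_0^T|J_{s,n}(h)|^{p-2}\Delta_n(s)\bigl(\int_s^T\Delta_n(t)\,dt\bigr)ds$ with $J_{t,n}(h)=I_t(h)-I_{t,n}(h)$, $\Delta_n(s)=h(X_s)-h(X_{\eta_n(s)})$, where the tail integral over $[\zeta_n(s),T]$ is handled by conditioning on $\Ff_{\zeta_n(s)}$ and applying, at the random point $X_{\zeta_n(s)}$, the \emph{uniform first-moment} bound $\sup_z\bigl|\Ee_z I_{T'}(h)-\Ee_z I_{T',n}(h)\bigr|\le 5B_{T,X}T\|h\|D_{T,\beta}(n)$, $T'\le T$ (Proposition \ref{p1}); resolving the resulting recursive inequality \eqref{recur_bound} produces $(14p(p-1)B_{T,X})^{1/2}T$. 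The variance cannot substitute for this input: from a uniform-in-starting-point variance bound, Jensen only gives conditional means of order $\|h\|T(B_{T,X}D_{T,\beta}(n))^{1/2}$, and feeding that into the recursion yields the rate $(D_{T,\beta}(n))^{1/4}$, strictly weaker than claimed --- so a reduction of $\Ee_x|\xi|^p$ to $\Ee_x\xi^2$ alone loses the theorem. The fix is within your own framework: your estimate $|P_r\psi(z)|\le\tfrac12\|h\|B_{T,X}\delta^2 r^{-\beta}$, summed over blocks with the same small/large-separation split, is exactly the uniform weak-rate bound of Proposition \ref{p1}; you must state it as such (uniformly in the initial point and in the horizon $T'\le T$) and run the recursion, rather than cite a variance-to-$L^p$ transfer that has no generic justification.
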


\begin{rem}  This theorem extends \cite[Theorem 2.1]{kul-gan}, where it was assumed that $\beta=1$. 
\end{rem}

The second result concerns the ``weak approximation'', i.e. the   control on  the difference between the expectations of certain terms, which involve  $I_{T}(h)$ together with  its approximation $I_{T,n}(h)$.

\begin{thm}\label{t2} Suppose that   \textbf{X} holds. Then for any $k \in \mathbb{N}$ and any bounded function $f$  we have
\begin{equation}\label{t2-eq}
\Big|\Ee_x (I_{T}(h))^k f(X_T)- \Ee_x(I_{T,n}(h))^kf(X_T)\Big|
\leq 2^{\beta\vee 2} k^2 B_{T,X} T^{k+1} \|h\|^k \|f\| D_{T,\beta} (n).
\end{equation}
\end{thm}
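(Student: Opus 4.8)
The plan is to reduce the comparison of $k$-th powers to a single-increment estimate by telescoping, and then to use condition \textbf{X} to show that the \emph{expectation} of a single increment against a bounded functional is genuinely smaller than the increment itself. This gain is what separates the weak rate $D_{T,\beta}(n)$ from the square-root rate $(D_{T,\beta}(n))^{1/2}$ that one would obtain by simply inserting Theorem \ref{t1} and $|a^k-b^k|\le|a-b|\cdot k\max(|a|,|b|)^{k-1}$. Concretely, writing $a^k-b^k=(a-b)\sum_{j=0}^{k-1}a^j b^{k-1-j}$ with $a=I_T(h)$, $b=I_{T,n}(h)$ and using $|I_T(h)|\le T\|h\|$, $|I_{T,n}(h)|\le T\|h\|$, I would set $\Psi=f(X_T)\sum_{j=0}^{k-1}(I_T(h))^j(I_{T,n}(h))^{k-1-j}$, so that $|\Psi|\le k\,T^{k-1}\|h\|^{k-1}\|f\|=:M$ and the left-hand side of \eqref{t2-eq} equals $|\Ee_x[(I_T(h)-I_{T,n}(h))\Psi]|$. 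Splitting along the partition points $t_m=mT/n$ via $I_T(h)-I_{T,n}(h)=\sum_{m=0}^{n-1}\int_{t_m}^{t_{m+1}}(h(X_t)-h(X_{t_m}))\,dt$ reduces the estimate to a sum of per-interval correlations $\int_{t_m}^{t_{m+1}}\Ee_x[(h(X_t)-h(X_{t_m}))\Psi]\,dt$.

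For each such correlation I would prepare two competing bounds. The trivial one is $|\Ee_x[(h(X_t)-h(X_{t_m}))\Psi]|\le 2M\|h\|$. The non-trivial one comes from condition \textbf{X}: conditioning on the behaviour of the process outside the open interval $(t_m,t_{m+1})$, the part of $\Psi$ depending on the path \emph{inside} the interval is itself of size $O(1/n)$ and is absorbed into the trivial estimate, while for the remaining ``outside'' part the correlation is governed by the $t$-derivative of a two-time object of the form $\Ee_x[h(X_t)\cdot(\text{bounded, } t\text{-independent functional})]$. Differentiating in $t$ makes $\partial_t p_t$ appear, and \eqref{der_bound} then gives a bound proportional to $B_{T,X}M\|h\|$ times a singularity of order $t_m^{-\beta}$ near the left end of the interval and $(T-t_{m+1})^{-\beta}$ near its right end (exactly as in the case $k=1$, where $\Psi=f(X_T)$ produces only these two singular times). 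The additive structure of $I_T(h)$ is what makes this step feasible: it turns the nonlinear functional into a superposition of two-time correlation functions, to which \eqref{der_bound} applies directly.

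With both bounds at hand I would carry out the summation over $m$ by a crossover argument. Near the singular times one keeps the trivial bound $2M\|h\|$; its contribution, integrated in $t$ and summed over the $O(n^{1-1/\beta})$ short intervals where it is the better of the two, is of order $n^{-1/\beta}$. Away from the singular times one uses the derivative bound, whose sum, after extracting the scaling of the partition, is controlled by $\sum_k k^{-\beta}$; this produces $n^{-1}\log n$ when $\beta=1$ and $n^{-1/\beta}$ when $\beta>1$, with the constant $\max(1,T^{1-\beta}/(\beta-1))$ arising from the tail of $\sum_k k^{-\beta}$ together with the partition scaling. Collecting the factors — one $k$ from $M$, a further $k$ from the number of coincidence (diagonal) terms generated by the powers of $I_T(h)$, the $T^{k-1}$ inside $M$, one $T$ from $\int_0^T dt$ and one more from the interval length $T/n$ — reproduces the prefactor $k^2 B_{T,X}T^{k+1}\|h\|^k\|f\|$, while the halving of distances used in the singular estimates accounts for the factor $2^{\beta\vee 2}$.

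The main obstacle is exactly the derivative/correlation bound for the nonlinear functional $\Psi$. Since $I_T(h)=\int_0^T h(X_r)\,dr$ integrates $h$ over times $r$ arbitrarily close to the query time $t$, a naive differentiation in $t$ generates diagonal singularities of order $(t-r)^{-\beta}$ that are \emph{not} integrable once $\beta\ge 1$; the entire argument depends on isolating this diagonal contribution (where $r$ falls in the same short interval as $t$, handled trivially at cost $O(1/n)$) from the genuinely off-diagonal part, and on invoking the Markov property so that only a single transition density carries the $t$-dependence. Arranging the singularity structure, and hence the crossover, so that it reproduces $D_{T,\beta}(n)$ \emph{uniformly in} $k$ rather than a weaker power is the delicate point; the remainder is bookkeeping of constants.
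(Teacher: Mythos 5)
Your reduction to $\Ee_x[(I_T(h)-I_{T,n}(h))\Psi]$ with $|\Psi|\le kT^{k-1}\|h\|^{k-1}\|f\|$ is sound, and your crossover scale is the right one, but the central step --- the derivative bound for the correlation with the nonlinear functional $\Psi$ --- fails as you have set it up. When you differentiate $\Ee_x[h(X_u)\Psi']$ in $u$, the $u$-dependence sits in \emph{two} transition densities, $p_{u-r_-}$ and $p_{r_+-u}$, where $r_\pm$ are the nearest times of the functional on either side of $u$; the product rule therefore produces gap singularities $(u-r_-)^{-\beta}$ and $(r_+-u)^{-\beta}$ whose locations are distributed over all of $[0,T]$, not just $t_m^{-\beta}$ and $(T-t_{m+1})^{-\beta}$ as in the case $k=1$. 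Your proposed remedy --- excising only the times $r$ in the \emph{same} partition interval as $t$, at cost $O(1/n)$ --- is quantitatively insufficient: a time $r$ in the adjacent interval still satisfies only $u-r\ge u-t_m$, and integrating out such $r$ against its (order-one) density leaves a boundary factor of order $(u-t_m)^{1-\beta}/(\beta-1)$; then $\int_{t_m}^{t}(u-t_m)^{1-\beta}\,du$ diverges for $\beta\ge 2$, and for $\beta\in(1,2)$ the sum over $m$ gives order $n^{\beta-2}$, which is strictly worse than $n^{-1/\beta}$ because $(\beta-1)^2>0$. In other words, the diagonal must be cut out at width $k_{n,\beta}T/n\asymp Tn^{-1/\beta}$ --- the very scale you correctly use for the endpoint singularities --- not at width $T/n$. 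A separate, unresolved technical issue is your conditioning on the path outside $(t_m,t_{m+1})$: the conditional law of $X_t$ given both endpoints is a bridge, so ratios of transition densities appear, and condition \textbf{X} gives no control over a density in a denominator.

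The paper's proof repairs exactly these two defects by working at the level of density products rather than differentiating a correlation function in $t$. One writes $(I_T(h))^k=k!\int_{S_{k,0,T}}\prod_{i=1}^k h(X_{s_i})\prod_i ds_i$, expresses both expectations through $\prod_i p_{s_i-s_{i-1}}(y_{i-1},y_i)\,p_{T-s_k}(y_k,z)$, and telescopes the product of densities one \emph{gap} at a time: the $r$-th term $J^{(r)}$ has the first $r-1$ gaps discretized, the single factor $p_{s_r-s_{r-1}}-p_{\eta_n(s_r)-\eta_n(s_{r-1})}=\int_{\eta_n(s_r)-\eta_n(s_{r-1})}^{s_r-s_{r-1}}\partial_v p_v\,dv$ differenced, and the remaining gaps exact. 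There is no product rule, hence exactly one one-sided singularity $v^{-\beta}$ per term, in the gap variable $s_r-s_{r-1}$; on the off-diagonal event $\{s_r-s_{r-1}>k_{n,\beta}T/n\}$ (with $k_{n,\beta}=[n^{1-1/\beta}]+1\ge 2$) one has $\eta_n(s_r)-\eta_n(s_{r-1})\ge(s_r-s_{r-1})/2$ --- the source of the $2^{\beta\vee 2}$ in \eqref{t2-eq} --- while everything else is integrated out exactly by Chapman--Kolmogorov (no bridges), and on the complementary near-diagonal event the trivial bound $2$ is used, contributing $O(k_{n,\beta}/n)=O(n^{-1/\beta})$. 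Your skeleton could be turned into the paper's proof by (i) widening your diagonal exclusion to gap $\le k_{n,\beta}T/n$ and (ii) replacing ``differentiate $\Ee_x[h(X_u)\Psi']$ in $u$'' by ``difference a single transition density in its own gap''; as written, however, the proposal does not yield the rate $D_{T,\beta}(n)$ for any $\beta>1$.
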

\begin{rem}  This theorem extends  \cite[Theorem~1.1]{kul-gan2}, where it was assumed that $\beta=1$. In the proof below, we concentrate on the case $\beta>1$.
\end{rem}

Using the Taylor expansion, one can obtain directly the following corollary of Theorem \ref{t2}.

\begin{cor}\label{cor1}
 Suppose that \textbf{X} holds, and  let $\varphi$ be an  analytic function defined in some neighbourhood of 0. In addition, suppose that the  constants $D_{\varphi},R_{\varphi}>0$ are   such that
$$
\Big|\frac{\varphi^{(m)}(0)}{m!}\Big| \leq D_{\varphi} \left( \frac{1}{R_{\varphi}} \right)^m,\quad m\geq 0.
$$
Then for any bounded function $f$ and a function $h$ such that $T\|h\|< R_\varphi$, we have the  following bound:
$$
\Big|\Ee_x \varphi( I_{T}(h)) f(X_T)- \Ee_x \varphi( I_{T,n}(h))f(X_T)\Big| \leq
C_{T,X,h,\varphi}  \|f\| D_{T,\beta} (n),
$$
where
$$
C_{T,X,h,\varphi}=2^{\beta\vee 2} D_{\varphi}B_{T,X}\frac{ T^2 \|h\|}{R_{\varphi}} \left(1+\frac{ T\|h\|}{R_{\varphi}}\right)\left(1-\frac{ T\|h\|}{R_{\varphi}}\right)^{-3}.
$$
\end{cor}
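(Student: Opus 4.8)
The plan is to expand $\varphi$ in its Taylor series about $0$ and apply Theorem \ref{t2} termwise. First I would record the uniform bounds on the two functionals: since $|h(X_t)|\le\|h\|$, we have $|I_T(h)|\le T\|h\|$, and likewise $|I_{T,n}(h)|\le \frac{T}{n}\cdot n\|h\|=T\|h\|$. By hypothesis $T\|h\|<R_\varphi$, so both random variables almost surely take values in a disc where the Taylor series $\varphi(u)=\sum_{m\ge0}\frac{\varphi^{(m)}(0)}{m!}u^m$ converges absolutely. Subtracting the two expansions, the constant ($m=0$) term cancels and leaves
\be
\varphi(I_T(h))-\varphi(I_{T,n}(h))=\sum_{m=1}^\infty \frac{\varphi^{(m)}(0)}{m!}\Big((I_T(h))^m-(I_{T,n}(h))^m\Big).
\ee

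Next I would multiply by $f(X_T)$ and take $\Ee_x$. The coefficient bound together with $|(I_T(h))^m-(I_{T,n}(h))^m|\le 2(T\|h\|)^m$ and $|f(X_T)|\le\|f\|$ shows that the partial sums are dominated by the convergent series $\sum_m 2D_\varphi\|f\|(T\|h\|/R_\varphi)^m$, so dominated convergence permits interchanging expectation and summation:
\be
\Ee_x\Big[\big(\varphi(I_T(h))-\varphi(I_{T,n}(h))\big)f(X_T)\Big]=\sum_{m=1}^\infty\frac{\varphi^{(m)}(0)}{m!}\,\Ee_x\Big[\big((I_T(h))^m-(I_{T,n}(h))^m\big)f(X_T)\Big].
\ee
Applying the triangle inequality, the coefficient bound $|\varphi^{(m)}(0)/m!|\le D_\varphi R_\varphi^{-m}$, and Theorem \ref{t2} with $k=m$ to each summand, and collecting $T^{m+1}\|h\|^m R_\varphi^{-m}=T(T\|h\|/R_\varphi)^m$, I obtain
\be
\Big|\Ee_x\big[(\varphi(I_T(h))-\varphi(I_{T,n}(h)))f(X_T)\big]\Big|\le 2^{\beta\vee2}D_\varphi B_{T,X}\|f\|\,D_{T,\beta}(n)\,T\sum_{m=1}^\infty m^2\Big(\frac{T\|h\|}{R_\varphi}\Big)^m.
\ee

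Finally I would evaluate the series. Setting $x=T\|h\|/R_\varphi\in[0,1)$ and differentiating the geometric series twice gives the standard identity $\sum_{m\ge1}m^2 x^m=x(1+x)(1-x)^{-3}$. Substituting and using $T\cdot x=T^2\|h\|/R_\varphi$ yields precisely the asserted constant
\be
C_{T,X,h,\varphi}=2^{\beta\vee2}D_\varphi B_{T,X}\frac{T^2\|h\|}{R_\varphi}\Big(1+\frac{T\|h\|}{R_\varphi}\Big)\Big(1-\frac{T\|h\|}{R_\varphi}\Big)^{-3}.
\ee
The only genuinely delicate point is the justification of the termwise application of Theorem \ref{t2}, namely the legitimacy of interchanging $\Ee_x$ with the infinite sum; this is exactly where the hypothesis $T\|h\|<R_\varphi$ is used, since it forces the dominating series to converge. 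Everything else is the elementary summation identity above.
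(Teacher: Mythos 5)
Your proof is correct and is exactly the argument the paper intends: the paper gives no written proof, stating only that the corollary follows directly from Theorem \ref{t2} via the Taylor expansion, and your termwise application of Theorem \ref{t2} with $k=m$, justified by dominated convergence under $T\|h\|<R_\varphi$, together with the identity $\sum_{m\geq 1} m^2 x^m = x(1+x)(1-x)^{-3}$, reproduces the stated constant $C_{T,X,h,\varphi}$ exactly.
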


Before proceeding to the proof of Theorem \ref{t1}, we give an auxiliary result this  proof is based on. This result is, in fact,  a weaker version
of Theorem \ref{t2} with $k=1$ and $f\equiv 1$, but we give it separately to make the exposition more transparent.

\begin{prop}\label{p1} Suppose that  \textbf{X} holds. Then
$$
\Big|\Ee_xI_{T}(h)-\Ee_xI_{T,n}(h)\Big|\leq 5B_{T,X} T \|h\| D_{T,\beta} (n).
$$
\end{prop}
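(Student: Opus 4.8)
The plan is to pass to expectations, reduce the difference to a sum of scalar time integrals, and then split the time axis at a point chosen to neutralize the non-integrable singularity of the derivative bound near $t=0$. First I would set $u(t):=\Ee_x h(X_t)=\int_{\Re^d}h(y)p_t(x,y)\,dy$ and $t_k:=kT/n$, so that
\[
\Ee_x I_T(h)-\Ee_x I_{T,n}(h)=\sum_{k=0}^{n-1}\int_{t_k}^{t_{k+1}}\bigl(u(t)-u(t_k)\bigr)\,dt .
\]
Two bounds on the increment $|u(t)-u(t_k)|$ are available. The crude one is $|u(t)-u(t_k)|\le 2\|h\|$, valid because $|u|\le\|h\|$. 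The refined one follows from the fundamental theorem of calculus: differentiating under the integral (legitimate by \eqref{der_bound}), $u$ is absolutely continuous with $u'(s)=\int_{\Re^d}h(y)\,\prt_s p_s(x,y)\,dy$, and since $q_{s,x}(\cdot)$ integrates to $1$,
\[
|u'(s)|\le \|h\|\,B_{T,X}\,s^{-\beta},\qquad\text{hence}\qquad |u(t)-u(t_k)|\le \|h\|B_{T,X}\int_{t_k}^{t}s^{-\beta}\,ds .
\]
By Fubini, $\int_{t_k}^{t_{k+1}}\!\int_{t_k}^{t}s^{-\beta}\,ds\,dt=\int_{t_k}^{t_{k+1}}(t_{k+1}-s)s^{-\beta}\,ds\le \tfrac{T}{n}\int_{t_k}^{t_{k+1}}s^{-\beta}\,ds$.

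For $\beta=1$ it suffices to use the crude bound on the first interval and the refined bound on the rest, since $\int_{T/n}^{T}s^{-1}\,ds=\log n$ reproduces the factor $D_{T,1}(n)=n^{-1}\log n$; this is essentially the argument of \cite[Theorem 2.1]{kul-gan}. For $\beta>1$ the function $s^{-\beta}$ is no longer integrable down to $0$, and applying the refined bound on every interval $[t_k,t_{k+1}]$ with $k\ge1$ would only yield the inferior rate $n^{\beta-2}$, dominated by the term nearest the origin. The remedy is to handle a whole initial block of intervals by the crude bound. Concretely, I would fix the threshold index $m:=\lceil n^{(\beta-1)/\beta}\rceil$, use $|u(t)-u(t_k)|\le2\|h\|$ for $k<m$ and the refined bound for $k\ge m$. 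Here $Tn^{-1/\beta}\le t_m\le 2Tn^{-1/\beta}$, the right inequality using $T/n\le Tn^{-1/\beta}$ (valid for $\beta\ge1$).

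It then remains to estimate the two blocks. The crude block is bounded by $2\|h\|\,t_m\le 4\|h\|\,Tn^{-1/\beta}\le 4B_{T,X}\|h\|\,T\,D_{T,\beta}(n)$, where I use $n^{-1/\beta}\le D_{T,\beta}(n)$ and $B_{T,X}\ge1$. For the refined block, summing the Fubini estimate over $k\ge m$ telescopes the integrals into $\int_{t_m}^{T}s^{-\beta}\,ds$ and gives $\|h\|B_{T,X}\tfrac{T}{n}\int_{t_m}^{T}s^{-\beta}\,ds\le \|h\|B_{T,X}\tfrac{T}{n}\cdot\frac{t_m^{1-\beta}}{\beta-1}$; inserting $t_m\ge Tn^{-1/\beta}$ collapses this to $\|h\|B_{T,X}\frac{T^{2-\beta}}{\beta-1}n^{-1/\beta}\le B_{T,X}\|h\|\,T\,D_{T,\beta}(n)$, since $\frac{T^{1-\beta}}{\beta-1}\le\max\!\bigl(1,\tfrac{T^{1-\beta}}{\beta-1}\bigr)$. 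Adding the two blocks produces exactly the claimed constant $5$.

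The main obstacle is the singularity for $\beta>1$: neither bound alone suffices, and the whole point is the balance between the crude block (of size $\sim Tn^{-1/\beta}$) and the refined block (of size $\sim \frac{T^{2-\beta}}{\beta-1}n^{-1/\beta}$). Choosing $m\sim n^{(\beta-1)/\beta}$ equalizes their orders and is precisely what generates both the exponent $n^{-1/\beta}$ and the factor $\max(1,\tfrac{T^{1-\beta}}{\beta-1})$ in $D_{T,\beta}(n)$; the only delicate point beyond this is the integer rounding of $m$, which has to be tracked to land on the clean constant $5$ (and to dispose of the smallest values of $n$).
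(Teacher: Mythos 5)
Your proof is correct and follows essentially the same route as the paper's: the same split of the time interval at a threshold index of order $n^{(\beta-1)/\beta}$ (the paper takes $k_{n,\beta}=[n^{1-1/\beta}]+1$ where you take $\lceil n^{(\beta-1)/\beta}\rceil$, which differ only by rounding), the same crude bound $2\|h\|$ on the initial block, and the same Fubini interchange combined with \eqref{der_bound} on the remaining block, arriving at the identical $4+1=5$ decomposition of the constant. The only cosmetic difference is that you phrase the argument through $u(t)=\Ee_x h(X_t)$ and sum interval by interval, whereas the paper keeps the transition densities explicit and integrates over $[k_{n,\beta}T/n,T]$ directly.
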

\begin{proof} Let us introduce the notation used throughout the whole section: for $t\in [kT/n, (k+1)T/n), k\geq 0 $, we put
$\eta_n(t)={kT\over n}, \ \zeta_n(t)={(k+1)T\over n}$; that is, $\eta_n(t)$ is the point of the partition $\{Tk/n,\, k\geq 0\}$ of the time axis, closest to $t$ from the left, and $\zeta_n(t)$ is the point closest to $t$ from the right, which is strictly larger than $t$.

We have
\begin{align*}
\Ee_xI_{T}(h)-\Ee_xI_{T,n}(h) &= \int_0^T \Ee_x[h(X_s) - h(X_{\eta_n(s)})]\,ds\\
&
= \int_0^T \int_{\Re^d}h(y)  [p_s(x,y) - p_{\eta_n(s)}(x,y)]\,dyds \\
&= M_1 + M_2,
\end{align*}
where
$$\ba
&M_1 = \int_0^{k_{n,\beta}T/n} \int_{\Re^d}h(y)  [p_s(x,y) - p_{\eta_n(s)}(x,y)]\,dyds,\\&
M_2 = \int_{k_{n,\beta}T/n}^T \int_{\Re^d}h(y)  [p_s(x,y) - p_{\eta_n(s)}(x,y)]\,dyds,
\ea$$
for some  $1\leq k_{n,\beta} \leq n$, which will be chosen  later. We estimate each term separately.

For $M_1$ we have
$$
|M_1| \leq \|h\| \int_0^{k_{n,\beta}T/n} \int_{\Re^d}  [p_s(x,y) + p_{\eta_n(s)}(x,y)]\,dyds = 2\|h\| T \frac{k_{n,\beta}}{n}.
$$
Further,  using (\ref{der_bound}), we  get
\be\label{M2}\ba
|M_2| &\leq \|h\| \int_{k_{n,\beta}T/n}^T \int_{\Re^d}  |p_s(x,y) - p_{\eta_n(s)}(x,y)|\,dyds
\\&
 \leq \|h\| \int_{k_{n,\beta}T/n}^T \int_{\eta_n(s)}^s \int_{\Re^d}  |\partial_u p_u(x,y)|\,dy duds
\\
&\leq B_{T,X}\|h\| \int_{k_{n,\beta}T/n}^T \int_{\eta_n(s)}^s \int_{\Re^d} u^{-\beta} q_{u,x}(y)
 \,dy duds  = B_{T,X} \|h\| \int_{k_{n,\beta}T/n}^T \int_{\eta_n(s)}^s u^{-\beta}\,duds\\
 &= B_{T,X} \|h\| \sum_{i = k_{n,\beta}}^{n-1} \int_{iT/n}^{(i+1)T/n} \int_{iT/n}^s u^{-\beta}\,duds
 \\&= B_{T,X} \|h\| \sum_{i = k_{n,\beta}}^{n-1} \int_{iT/n}^{(i+1)T/n} \int_u^{(i+1)T/n} u^{-\beta}\,dsdu\\&
\leq \frac{T}{n} B_{T,X} \|h\| \sum_{i = k_{n,\beta}}^{n-1} \int_{iT/n}^{(i+1)T/n}  u^{-\beta}\,du = \frac{T}{n} B_{T,X} \|h\| \int_{k_{n,\beta}T/n}^{T}  u^{-\beta}\,du.
\ea\ee

Now we finalize the argument.

1) If  $\beta = 1$,  put $k_{n,\beta} = 1, \ n\geq 1$. Then we get
$$\ba
&|M_1| \leq 2\|h\| T n^{-1},\\&
|M_2| \leq \frac{T}{n} B_{T,X} \|h\| \int_{T/n}^{T}  u^{-1}\,du = B_{T,X} T \|h\| n^{-1} \log n.
\ea$$

2) If $\beta > 1$,  put $k_{n,\beta} = [n^{1-1/\beta}]+1$. Then
$$
|M_1| \leq 2\|h\| T  \frac{[n^{1-1/\beta}]+1}{n} \leq 2\|h\| T  \frac{n^{1-1/\beta}+1}{n} \leq 4\|h\| T  n^{-1/\beta}.
$$
To estimate $M_2$ observe that
\begin{equation}\label{beta1}
\begin{split}
\int_{k_{n,\beta}T/n}^{T}  u^{-\beta}\,du\leq \frac{T^{1-\beta}}{\beta-1}\left(\frac{k_{n,\beta}}{n}\right)^{1-\beta}\leq   \frac{T^{1-\beta}}{\beta-1} \left(\frac{n^{1-1/\beta}}{n}\right)^{1-\beta}\leq \frac{T^{1-\beta}}{\beta-1}n^{1-1/\beta}.
\end{split}
\end{equation}
Therefore,
\begin{align*}
|M_2| &\leq \frac{T}{n} B_{T,X} \|h\| \int_{k_{n,\beta}T/n}^{T}  u^{-\beta}\,du
\leq \frac{B_{T,X}}{\beta-1} T^{2- \beta} \|h\| n^{-1/\beta}.
\end{align*}

\end{proof}

\begin{proof}[Proof of Theorem \ref{t1}] Since we can obtain the required bound for $p<2$ from the bound with $p=2$ by the H\"older inequality, we  consider the case $p\geq 2$ only.

Define
$$
J_{t,n}(h):=I_{t}(h)-I_{t,n}(h)=\int_0^t \Delta_n(s)ds, \quad \Delta_n(s):=h(X_s) - h(X_{\eta_n (s)}).
$$
By definition,  the  function $t\mapsto J_{t,n}(h)$ is absolutely continuous. Then  using the Newton-Leibnitz formula twice we get
$$
\Big|J_{T,n}(h)\Big|^p=p(p-1)\int_0^T\Big|J_{s,n}(h)\Big|^{p-2}\Delta_n(s)\left(\int_s^T\Delta_n(t)\, dt\right)ds.
$$
Therefore,
$$
\Big|J_{T,n}(h)\Big|^p\leq p(p-1)(H_{T,n,p}^1(h)+H_{T,n,p}^2(h)),
$$
where
$$
 H_{T,n,p}^1(h)=\int_0^T\Big|J_{s,n}(h)\Big|^{p-2}|\Delta_n(s)|\left|\int_s^{\zeta_n(s)}\Delta_n(t)\, dt\right|ds,
$$
$$
H_{T,n,p}^2(h)=\int_0^T\Big|J_{s,n}(h)\Big|^{p-2}\Delta_n(s)\left(\int_{\zeta_n(s)}^T\Delta_n(t)\, dt\right)ds.
$$
Let us  estimate separately the expectations of $ H_{T,n,p}^1(h)$ and $H_{T,n,p}^2(h)$. By the H\"older inequality,
\begin{align*}
\Ee_x H_{T,n,p}^1(h)&\leq \left(\Ee_x\int_0^T\Big|J_{s,n}(h)\Big|^{p}\, ds\right)^{1-2/p}\left(\Ee_x\int_0^T|\Delta_n(s)|^{p/2}\left|\int_s^{\zeta_n(s)}\Delta_n(t)\, dt\right|^{p/2}\, ds\right)^{2/p}
\\
&\leq \left(\Ee_x\int_0^T\Big|J_{s,n}(h)\Big|^{p}\, ds\right)^{1-2/p} \left((2\|h\|)^{p/2} T (2\|h\|)^{p/2} \left(\frac{T}{n}\right)^{p/2}\right)^{2/p}\\
&
= 4  T^{1+2/p} n^{-1} \|h\|^2  \left(\Ee_x\int_0^T\Big|J_{s,n}(h)\Big|^{p}\, ds\right)^{1-2/p}.
\end{align*}

Further, observe that for every $s$ the variables $$\Delta_n(s), \quad |J_{s,n}(h)|^{p-2}\Delta_n(s)
$$ are $\Ff_{\zeta_n(s)}$-measurable; here and below $\{\Ff_t, t\geq 0\}$ denotes the natural filtration for the process $X$. Hence,
$$\ba
\Ee_xH_{T,n,p}^2(h)&=\Ee_x\left(\int_0^T\Big|J_{s,n}(h)\Big|^{p-2}\Delta_n(s)\Ee_x\left(\int_{\zeta_n(s)}^T\Delta_n(t)\, dt\Big| \Ff_{\zeta_n(s)}\right)ds\right)\\&
\leq \Ee_x\left(\int_0^T\Big|J_{s,n}(h)\Big|^{p-2}|\Delta_n(s)|\left|\Ee_x\left(\int_{\zeta_n(s)}^T\Delta_n(t)\, dt\Big| \Ff_{\zeta_n(s)}\right)\right|ds\right).
\ea
$$
By Proposition \ref{p1} and the Markov property of $X$, we have
$$
\left|\Ee_x\left(\int_{\zeta_n(s)}^T\Delta_n(t)\, dt\Big|\Ff_{\zeta_n(s)}\right)\right| = \left|E_{X_{\zeta_n(s)}}\int_{0}^{T-\zeta_n(s)}\Delta_n(t)\, dt\right| \leq 5B_{T,X} T D_{T,\beta} (n) \|h\|.
$$
Therefore, using the H\"older inequality, we get
$$\ba
\Ee_xH_{T,n,p}^2(h)&\leq 5B_{T,X} T D_{T,\beta} (n) \|h\| \left(\Ee_x\int_0^T\Big|J_{s,n}(h)\Big|^{p}\, ds\right)^{1-2/p}\left(\Ee_x\int_0^T|\Delta_n(s)|^{p/2}\, ds\right)^{2/p}\\&
\leq 10B_{T,X} T^{1+2/p} D_{T,\beta} (n) \|h\|^2\left(\Ee_x\int_0^T\Big|J_{s,n}(h)\Big|^{p}\, ds\right)^{1-2/p}.
\ea
$$
Note that $n^{-1}\leq D_{T,\beta} (n)$, hence the above bounds for $\Ee_x H_{T,n,p}^1(h)$ and $\Ee_xH_{T,n,p}^2(h)$ finally yield the estimate \be\label{recur_bound}
\Ee_x \Big|J_{T,n}(h)\Big|^{p}\leq 14p(p-1)B_{T,X} T^{1+2/p} D_{T,\beta} (n) \|h\|^2\left(\Ee_x\int_0^T\Big|J_{s,n}(h)\Big|^{p}\, ds\right)^{1-2/p}.
\ee

It can be  easily  seen that the above inequality also holds true if $J_{T,n}(h)$ in the left hand side is replaced by $J_{t,n}(h)$.  Taking the integral over $t\in [0,T]$, we get
$$
\Ee_x\int_0^T\Big|J_{t,n}(h)\Big|^{p}\, dt\leq 14p(p-1)B_{T,X} T^{2+2/p} D_{T,\beta} (n) \|h\|^2\left(\Ee_x\int_0^T\Big|J_{s,n}(h)\Big|^{p}\, ds\right)^{1-2/p}.
$$

Because $h$ is bounded, the left hand side expression in the above inequality is finite. Hence, resolving this inequality, we get
$$
\Ee_x\int_0^T\Big|J_{s,n}(h)\Big|^{p}\,ds\leq (14p(p-1)B_{T,X})^{p/2} T^{p+1} (D_{T,\beta} (n))^{p/2} \|h\|^p,
$$
which together with (\ref{recur_bound}) gives the required statement.\normal

\end{proof}

\begin{proof}[Proof of Theorem \ref{t2}] Denote
$$S_{k,a,b}:= \{(s_1,s_2,...,s_k) \in \mathbb{R}^k : a\leq s_1 < s_2 < ...< s_k \leq b\}, \ k \in \mathbb{N}, \  a,b \in \mathbb{R}.$$

We have
\be\ba \label{sumJi}
&\Ee_x \Big[(I_{T}(h))^k-(I_{T,n}(h))^k \Big] f(X_T)\\&
= k!\,\Ee_x \int_{S_{k,0,T}}[h(X_{s_1})h(X_{s_2})...h(X_{s_k}) - h(X_{\eta_n(s_1)})h(X_{\eta_n(s_2)})...h(X_{\eta_n(s_k)})] f(X_T) \prod_{i=1}^k ds_{i} \\&
= k!\, \int_{S_{k,0,T}}\int_{(\mathbb{R}^d)^{k+1}}\left(\prod_{i=1}^kh(y_i)\right)f(z)  \left(\prod_{i=1}^k p_{s_i-s_{i-1}}(y_{i-1},y_i)\right)p_{T-s_k}(y_k,z)dz \prod_{j=1}^k dy_j \prod_{i=1}^k ds_{i}\\&
-k!\, \int_{S_{k,0,T}}\int_{(\mathbb{R}^d)^{k+1}}\left(\prod_{i=1}^kh(y_i)\right)f(z) \left(\prod_{i=1}^k p_{\eta_n(s_i)-\eta_n(s_{i-1})}(y_{i-1},y_i)\right)\\&
\times p_{T-\eta_n(s_k)}(y_k,z) dz \prod_{j=1}^k dy_j \prod_{i=1}^k ds_{i}\\&
= k!\sum_{r=1}^{k}\int_{S_{k,0,T}}\int_{(\mathbb{R}^d)^{k+1}} \left(\prod_{i=1}^kh(y_i)\right)f(z) J_{s_1, \dots, s_k,T}^{(r)}(x,y_1, \dots, y_k,z)dz \prod_{j=1}^k dy_j \prod_{i=1}^k ds_{i},
\ea\ee
where the convention  $s_0 = 0, s_{k+1}=T, y_0 = x, y_{k+1}=z$ is used and the functions $J^{(r)}, r=1, \dots, k$ are defined by the relations
\begin{align*}
J_{s_1, \dots, s_k,T}^{(r)}&(x,y_1, \dots, y_k,z) = \left(\prod_{i=1}^{r-1} p_{\eta_n(s_i)-\eta_n(s_{i-1})}(y_{i-1},y_i)\right)\\
&\quad \times  \Big(p_{s_r-s_{r-1}}(y_{r-1},y_r) - p_{\eta_n(s_r)-\eta_n(s_{r-1})}(y_{r-1},y_r)\Big) \left(\prod_{i=r}^k p_{s_{i+1}-s_{i}}(y_{i},y_{i+1})\right).
\end{align*}

Let us estimate the $r$-th term in the last line in (\ref{sumJi}). We have
\begin{align*}
\int_{S_{k,0,T}}&\int_{(\mathbb{R}^d)^{k+1}} \left(\prod_{i=1}^kh(y_i)\right)f(z) J_{s_1, \dots, s_k,T}^{(r)}(x,y_1, \dots, y_k,z) dz \prod_{j=1}^k dy_j \prod_{i=1}^k ds_{i}
\\&
\leq \|h\|^k \|f\| \int_{S_{k,0,T}}\int_{(\mathbb{R}^d)^{k+1}} \ \big| J_{s_1, \dots, s_k,T}^{(r)}(x,y_1, \dots, y_k,z) \big| dz \prod_{j=1}^k dy_j \prod_{i=1}^k ds_{i}.
\end{align*}
Since the case  $\beta=1$ was already treated in \cite{kul-gan2}, for the rest of the proof we assume that
$\beta>1$.

Consider  two cases: a) $s_r-s_{r-1}> k_{n,\beta}T/n$ and b) $s_r-s_{r-1}\leq k_{n,\beta}T/n$.

In case a), using  condition \textbf{X} and the Chapman-Kolmogorov equation, we derive
$$\ba
&\int_{(\mathbb{R}^d)^{k+1}} \big| J_{s_1, \dots, s_k,T}^{(r)}(x,y_1, \dots, y_k,z) \big| dz \prod_{j=1}^k dy_j \\&\leq B_{T,X} \int_{(\mathbb{R}^d)^2}p_{\eta_n(s_{r-1})-\eta_n(s_0)}(x,y_{r-1})
 \left|\int_{\eta_n(s_r)-\eta_n(s_{r-1})}^{s_r-s_{r-1}} v^{-\beta} q_{v,y_{r-1}}(y_r)dv\right|dy_{r-1}dy_r.
 \ea$$
 Since $k_{n,\beta}\geq 2$, then in  case a) we have $s_r-s_{r-1}\geq 2T/n$, and hence
$$
\eta_n(s_r)-\eta_n(s_{r-1})\geq s_r-\frac{T}{n}-s_{r-1}\geq \frac{s_r-s_{r-1}}{2}.
$$
Therefore, using the fact that $q_{t,y}(\cdot)$ is the probability density for any $t>0$ and $y\in \mathbb{R}^d$, we finally get
\begin{align*}
\int_{(\mathbb{R}^d)^{k+1}} \big| J_{s_1, \dots, s_k,T}^{(r)}(x,y_1, \dots, y_k,z) \big| dz \prod_{j=1}^k dy_j&\leq  B_{T,X} \int_{s_r-s_{r-1}-T/n}^{s_r-s_{r-1}} v^{-\beta}dv\leq \frac{B_{T,X}T}{n} \Big(\frac{s_r-s_{r-1}}{2}\Big)^{-\beta}.
\end{align*}
In case b) we simply apply the Chapman-Kolmogorov equation:
 $$\ba
&\int_{(\mathbb{R}^d)^{k+1}} \big| J_{s_1, \dots, s_k,T}^{(r)}(x,y_1, \dots, y_k,z) \big| dz \prod_{j=1}^k dy_j\\
&\leq
\int_{(\mathbb{R}^d)^2}p_{\eta_n(s_{r-1})-\eta_n(s_0)}(x,y_{r-1}) \Big(p_{s_r-s_{r-1}}(y_{r-1},y_r) + p_{\eta_n(s_r)-\eta_n(s_{r-1})}(y_{r-1},y_r)\Big) dy_{r-1}dy_r\\
 &\leq 2.
 \ea$$
Therefore, summarizing the estimates obtained in cases a) and b) we get, using \eqref{beta1}, the estimates \begin{align*}
\int_{S_{k,0,T}}&\int_{(\mathbb{R}^d)^{k+1}} \ \big| J_{s_1, \dots, s_k,T}^{(r)}(x,y_1, \dots, y_k,z) \big| dz \prod_{j=1}^k dy_j \prod_{i=1}^k ds_{i}\\
&\leq \frac{B_{T,X}T}{n}  2^{\beta} \int_0^T \int_0^{s_r-k_{n,\beta}T/n} \frac{s_{r-1}^{r-1}}{(r-1)!}(s_r-s_{r-1})^{-\beta}\frac{(T-s_r)^{k-r}}{(k-r)!}ds_{r-1}ds_r\\
&+ 2\int_0^T\int_{s_r-k_{n,\beta}T/n}^{ s_r}\frac{s_{r-1}^{r-1}}{(r-1)!}\frac{(T-s_r)^{k-r}}{(k-r)!} ds_{r-1}ds_r\\
&\leq \frac{B_{T,X}T}{n}  2^{\beta} \int_0^T \frac{s_{r}^{r-1}}{(r-1)!}\frac{(T-s_r)^{k-r}}{(k-r)!}ds_r\Big(\int_{k_{n,\beta}T/n}^{T}u^{-\beta}du\Big)\\
&+ \frac{2Tk_{n,\beta}}{n}\int_0^T\frac{s_{r}^{r-1}}{(r-1)!}\frac{(T-s_r)^{k-r}}{(k-r)!} ds_r\\
&\leq \frac{ 2^{\beta} B_{T,X}T^{k+2-\beta}}{(\beta-1)(k-1)!}  n^{-1/\beta}+ \frac{4T^{k+1}}{(k-1)!} n^{-1/\beta}\\
&\leq \frac{2^{\beta\vee 2} T^{k+1}B_{T,X} D_{T,\beta}(n)}{(k-1)!},
\end{align*}
where in the fourth and the fifth lines we used that  $s_{r-1}^{r-1}\leq s_r^{r-1}$.
Taking into account that in \eqref{sumJi} we have $k$ terms, and the common  multiplier $k!$, we finally arrive at \eqref{t2-eq}.
\end{proof}

\section{Condition \textbf{X} for solutions to L\'evy driven SDEs}\label{s3}

Consider  the  SDE
\begin{equation}\label{sde1}
dX_t = b(X_t) dt + dZ_t, \quad X_0=x,
\end{equation}
where $Z$ is a real-valued   L\'evy process. In  \cite{KK14} it was shown that if $Z_t$ is a symmetric $\alpha$-stable process and  $b(\cdot)$ is bounded and Lipschitz continuous, then the  solution to equation  \eqref{sde1} satisfies condition \textbf{X} with $\beta=\max(1, 1/\alpha)$ (in fact, in \cite{KK14} more general multidimensional SDEs of the form \eqref{SDE} are considered). In this section we outline the argument which makes it possible to extend the class of ``L\'evy noises''. Namely, we will omit the requirement on $Z$ to be \emph{symmetric}, and relax the stability assumption, demanding $Z$ to be ``locally $\alpha$-stable'' in the sense we specify below.

Recall that for a real-valued L\'evy process the characteristic function  is of the form
$$
\mathbb{E} e^{i\xi Z_t}= e^{-t\psi(\xi)}, \quad t>0, \, \xi \in \real,
$$
where the \emph{characteristic exponent} $\psi$ admits the L\'evy-Khinchin representation
\begin{equation}\label{psi1}
\psi(\xi) =-ia\xi+{1\over 2}\sigma^2\xi^2+\int_\real \big(1-e^{i \xi u}+ i \xi u\I_{\{|u|\leq 1\}} \big)\mu(du).
\end{equation}
In what follows, we assume that $\sigma^2=0$ and the \emph{L\'evy measure} $\mu$ is of the form  \begin{equation}\label{tilm}
 \mu(du)=C_{+} u^{-1-\alpha}\I_{u\in (0,1)}du +C_{-} |u|^{-1-\alpha}\I_{u\in (-1,0)}du+  m(u) du,
 \end{equation}
 with some $C_\pm\geq 0$ and $m(u)\geq 0$ such that $m(u)=0$ for $|u|\leq 1$, and
 \begin{equation}\label{m1}
 m(u)\leq c|u|^{-1-\alpha}, \quad |u|\geq 1.
\end{equation}
On the interval $[-1,1]$ the L\'evy measure $\mu$ given by (\ref{tilm}) coincides with the L\'evy measure of a (non-symmetric) $\alpha$-stable process. This is the reason for us to call $Z$ a ``locally $\alpha$-stable'' process: its ``local behavior'' near the origin is similar to those of the $\alpha$-stable process. In that context condition (\ref{m1}) means that the ``tails'' of the L\'evy measure for $\mu$ are dominated by the ``tails'' the $\alpha$-stable L\'evy measure.

Let us impose three minor conventions, which will simplify the technicalities below. First, since we are mostly interested in the case $\beta>1$, we assume  that $\alpha<1$. Second, the latter assumption assures that the integral
$$
\int_{\{|u|\leq 1\}}  u\mu(du)
$$
is well defined, and we assume that the constant $a$ in \eqref{psi1} equals to this integral; that is, $\psi$ has the form
$$
\psi(\xi) =\int_\real \big(1-e^{i \xi u}\big)\mu(du).
$$
Clearly, this does not restrict the generality because one can change the constant $a$ by changing respectively the drift coefficient $b(\cdot)$ in \eqref{sde1}. Finally, in order to avoid the usage of the Rademacher theorem (see  \cite[Lemma~7.4]{KK14} for the case when $b$ is just Lipschitz continuous), let us assume that $b\in C^1(\real)$.

In what follows we show how the \emph{parametrix construction} developed in \cite{KK14} can be modified to provide the representation and the bonds for the transition probability density $p_t(x,y)$ of the solution to \eqref{sde1} driven by the ``locally stable'' noise $Z$.

 Let us introduce some notation and give some preliminaries. We denote the space and the space-time convolutions respectively by
$$
(f\ast g)(x,y):=\int_{\Re^d}f(x,z)g(z,y)\, dz,
$$
$$
(f\star g)_t(x,y):=\int_0^t(f_{t-s}\ast g_s)(x,y)\, ds=\int_0^t\int_{\Re^d}f_{t-s}(x,z)g_{s}(z,y)\, dzds.
$$

Generically, the parametrix construction provides the representation of the required transition probability density in the form
\begin{equation}\label{p10}
p_t(x,y)= p_t^0(x,y)+ \int_0^t \int_\real p^0_{t-s}(x,z) \Psi_s (z,y) dzds, \quad t>0, \quad x,y\in \real.
\end{equation}
Here $p^0_t(x,y)$ is a  ``zero approximation term'' for the unknown $p_t(x,y)$,  the function $\Psi_t(x,y)$ is given by the  ``convolution series''
\begin{equation}\label{Psi}
\Psi_t(x,y)= \sum_{k=1}^\infty \Phi_t^{\star k} (x,y), \quad t>0, \quad x,y\in \real,
\end{equation}
 the function $\Phi_t(x,y)$ depends on the particular choice  of $p^0_t(x,y)$, and equals
\begin{equation}\label{phi10}
\Phi_t(x,y):= \big(L_x - \partial_t\big)p_t^0(x,y),
\end{equation}
where
\begin{align*}
L f(x):&=  b(x) f'(x)+ \int_\real \big(f(x+u)-f(x)\big) \mu(du), \quad f\in C^2_b (\real)
\end{align*}
is the formal generator of the process $X$. The subscript $x$  in above expressions means that the operator is applied with respect to  the variable $x$. Note that
to make the above construction feasible, one should properly choose the ``zero approximation term'' $p_t^0(x,y)$, so that the convolution series \eqref{Psi} converges and the space-time convolution in  (\ref{p10}) is well defined. To introduce  in our setting such $p_t^0(x,y)$, and then to construct the bounds for the associated $\Phi_t(x,y)$ and its convolution powers,  we need some more notation.

Denote by $Z^{(\alpha, C_\pm)}$ the $\alpha$-stable process with the L\'evy measure $\mu_{\alpha, C_\pm}(du)=m^{(\alpha, C_\pm)}(u)\, du$, $$
m^{(\alpha, C_\pm)}(u):=C_{+} u^{-1-\alpha}\I_{u>0}du +C_{-} (-u)^{-1-\alpha}\I_{u<0},
$$
and the characteristic exponent
$$
\psi^{(\alpha, C_\pm)}(\xi)= \int_\real \big(1-e^{i\xi u}\big)\mu^{(\alpha, C_\pm)}(du).
$$
Note that since
$$
\psi^{(\alpha, C_\pm)}(c\xi)=c^\alpha \psi^{(\alpha, C_\pm)}(\xi),\quad c>0,
$$
the process $Z^{(\alpha, C_\pm)}$ possesses the scaling property
$$
\mathrm{Law}\,\big(Z_{ct}^{(\alpha, C_\pm)}\big)=\mathrm{Law}\,\big(c^{1/\alpha}Z_t^{(\alpha, C_\pm)}\big), \quad c>0.
$$
Denote by $g_t^{(\alpha,C_\pm)} $ the distribution  density of $Z^{(\alpha, C_\pm)}_t$. By the scaling property we have
$$
g_t^{(\alpha,C_\pm)}(x)=t^{-1/\alpha}g^{(\alpha,C_\pm)}\left(xt^{-1/\alpha}\right), \quad g^{(\alpha,C_\pm)}:=g^{(\alpha,C_\pm)}_1.
$$
Denote  also  by $Z^{(\alpha)}$ the \emph{symmetric} $\alpha$-stable process; that is,  the process of the form introduced above with $C_+=C_-=1$. Let    $g_t^{(\alpha)} $ be the respective distribution density and $g^{(\alpha)}:= g_1^{(\alpha)}$.

 Finally, denote by $\chi_t(x)$ and $\theta_t(y)$, respectively,  the solutions to the ODEs
\begin{equation}\label{ODE}
d\chi_t = b(\chi_t)dt, \quad \chi_0=x,\quad d\theta_t =- b(\theta_t)dt, \quad \theta_0=y.
\end{equation}
Note that these solutions exist, because  $b(\cdot)$ is Lipschitz continuous.

Now we are ready to formulate the main statement of this section.

 \begin{thm}\label{lem-der} Let
\begin{equation}\label{p0}
p^0_t(x,y):=g^{(\alpha,C_\pm)}_t(\theta_t(y)-x).
\end{equation}
Then  the  convolution series \eqref{Psi} is well defined, and the formula (\ref{p10}) gives the representation of the transition  probability density $p_t(x,y)$ of the process $X$.  This density and its time derivative have the following upper bounds:
\begin{equation}\label{ptx}
p_t(x,y)\leq C \big(g_{t+1}^{(\alpha)}+ g_t^{(\alpha)}\big)(y-\chi_t(x)),\quad t\in (0,T], \, x,y\in \real,
\end{equation}
\begin{equation}\label{ptx1}
\prt_t p_t(x,y)\leq C t^{-1/\alpha}\big(g_{t+1}^{(\alpha)}+ g_t^{(\alpha)}\big)(y-\chi_t(x)), \quad t\in (0,T], \, x,y\in \real,
\end{equation}
Consequently, the process $X$ satisfies  condition  \textbf{X} with $\beta= 1/\alpha$ and
$$
q_{t,x}(y)=\big(g_{t+1}^{(\alpha)}+ g_t^{(\alpha)}\big)(y-\chi_t(x)).
$$
 \end{thm}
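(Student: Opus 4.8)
The plan is to run the parametrix programme set up in \eqref{p10}--\eqref{phi10} with the zero-order term \eqref{p0}, in three movements: compute and bound $\Phi_t=(L_x-\prt_t)p^0_t$, sum the series \eqref{Psi} and identify the density, then extract the kernel bound \eqref{ptx} and the time-derivative bound \eqref{ptx1}. First I would compute $\Phi_t$ explicitly. Writing $w=\theta_t(y)-x$ and using the ODE \eqref{ODE} for $\theta$, one gets $\prt_t p^0_t(x,y)=(\prt_t g^{(\alpha,C_\pm)}_t)(w)-b(\theta_t(y))(g^{(\alpha,C_\pm)}_t)'(w)$, while $L_xp^0_t(x,y)=-b(x)(g^{(\alpha,C_\pm)}_t)'(w)+\int_\real\big(g^{(\alpha,C_\pm)}_t(w-u)-g^{(\alpha,C_\pm)}_t(w)\big)\mu(du)$. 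Since $g^{(\alpha,C_\pm)}_t$ is the density of $Z^{(\alpha,C_\pm)}$ it solves the forward equation $(\prt_t g^{(\alpha,C_\pm)}_t)(w)=\int_\real\big(g^{(\alpha,C_\pm)}_t(w-u)-g^{(\alpha,C_\pm)}_t(w)\big)\mu^{(\alpha,C_\pm)}(du)$, so subtraction removes the principal jump singularity and the drift terms collapse to a single increment:
\begin{align*}
\Phi_t(x,y)=\big(b(\theta_t(y))-b(x)\big)(g^{(\alpha,C_\pm)}_t)'(w)+\int_\real\big(g^{(\alpha,C_\pm)}_t(w-u)-g^{(\alpha,C_\pm)}_t(w)\big)\big(\mu-\mu^{(\alpha,C_\pm)}\big)(du).
\end{align*}
This is the key cancellation produced by centring at the backward flow.

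The first summand is tamed by $b\in C^1(\real)$: since $x=\theta_t(y)-w$ one has $|b(\theta_t(y))-b(x)|\le\|b'\|\,|w|$, and the scaling identity $|w|\,|(g^{(\alpha,C_\pm)}_t)'(w)|\le C\,g^{(\alpha,C_\pm)}_t(w)$ (the factor $|w|$ exactly absorbs the $t^{-1/\alpha}$ produced by the gradient) makes it \emph{bounded in $t$}. The second summand is harmless because, by \eqref{tilm}--\eqref{m1}, $\mu-\mu^{(\alpha,C_\pm)}$ is a \emph{finite signed} measure, so it is dominated by a finite-mass convolution of $g^{(\alpha,C_\pm)}_t$; this is precisely the term forcing the auxiliary kernel $g^{(\alpha)}_{t+1}$ into the bounds, since one large jump spreads mass to a heavier tail. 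Comparing the (possibly non-symmetric) stable density with the symmetric one and recentring $\theta_t(y)-x$ to $y-\chi_t(x)$ through the Lipschitz flow maps, I obtain $|\Phi_t(x,y)|\le C\big(g^{(\alpha)}_{t+1}+g^{(\alpha)}_t\big)(y-\chi_t(x))=:C\kappa_t(x,y)$, with no time singularity. Summing \eqref{Psi} is then routine: assuming $|\Phi^{\star k}_t|\le C^k\tfrac{t^{k-1}}{(k-1)!}\kappa_t$, the space--time convolution with $\Phi$ reproduces $\kappa$ via a Chapman--Kolmogorov-type sub-convolution inequality for $g^{(\alpha)}_{t+1}+g^{(\alpha)}_t$, and the time simplex contributes $\int_0^t ds$, which closes the induction and yields $|\Psi_t|\le C\kappa_t$ on $(0,T]$. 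That $p_t$ in \eqref{p10} is the genuine transition density follows from the identity $\Psi=\Phi+\Phi\star\Psi$, which gives $(L_x-\prt_t)p=0$ together with mass one and nonnegativity, identifying $p$ by uniqueness for the associated martingale problem; bounding $p^0_t\le C\kappa_t$ and $\int_0^t C(\kappa_{t-s}\ast\kappa_s)\,ds\le CT\kappa_t$ then gives \eqref{ptx}.

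The delicate part is \eqref{ptx1}. Here there is no compensating factor $|w|$, so the relevant estimates carry the genuine singularity $|\prt_xp^0_\tau|\le C\tau^{-1/\alpha}\kappa_\tau$, and because $\alpha<1$ the exponent $1/\alpha>1$ makes $\int_0^t(t-s)^{-1/\alpha}\,ds$ \emph{divergent}; thus the time derivative of \eqref{p10} cannot be controlled by naive term-by-term differentiation. I would instead first establish the gradient estimate $|\prt_xp_t|\le Ct^{-1/\alpha}\kappa_t$ by differentiating $p^0\star\Psi$ and splitting the time integral at $t/2$: on $[0,t/2]$ the factor $p^0_{t-s}$ is differentiated directly, its argument being bounded below so that $(t-s)^{-1/\alpha}\le Ct^{-1/\alpha}$; on $[t/2,t]$ one integrates by parts in the spatial variable, using $\prt_xp^0_\tau(x,z)=-\big(\theta_\tau'(z)\big)^{-1}\prt_zp^0_\tau(x,z)$, to transfer the derivative onto the factor $\Psi_s$, whose gradient is singular only as $s\to0$ and hence harmless for $s\ge t/2$. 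In both pieces the convolution reproduces $\kappa_t$ and the time integral contributes at most $t^{1-1/\alpha}\le Tt^{-1/\alpha}$. Finally \eqref{ptx1} follows from the backward equation $\prt_tp_t=L_xp_t$, whose drift part $b(x)\prt_xp_t$ carries the leading $t^{-1/\alpha}$ while the nonlocal part is of order $t^{-1}$ and is dominated precisely because $\alpha<1$; this is the mechanism producing $\beta=1/\alpha$ in condition \textbf{X}.

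The main obstacle throughout is the control of this non-integrable $t^{-1/\alpha}$ singularity in the derivative estimates, together with the twin requirements that the two-kernel profile $g^{(\alpha)}_{t+1}+g^{(\alpha)}_t$ be reproduced under space--time convolution and that it survive the recentring between the forward flow $\chi_t$ and the backward flow $\theta_t$. The convergence of \eqref{Psi} and the bound \eqref{ptx} are comparatively soft once $\Phi$ is shown to be time-bounded; it is the splitting-and-integration-by-parts analysis behind \eqref{ptx1} that forms the technical heart of the argument.
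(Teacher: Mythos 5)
Your first two movements coincide with the paper's proof almost step for step: the paper computes $\Phi_t=(L_x-\prt_t)p^0_t$ with exactly the cancellation you describe, splits it into the drift-increment term $\Phi^1_t$ and the tail term $\Phi^2_t$, bounds them via the comparison estimates \eqref{ga9}--\eqref{ga92} (your ``scaling identity'' $|w|\,|(g^{(\alpha,C_\pm)})'(w)|\le C g^{(\alpha)}(w)$ is \eqref{ga91}, and your ``finite signed measure'' observation is the paper's bound $|m(u)-m^{(\alpha,C_\pm)}(u)|\I_{\{|u|\geq 1\}}\le Cg^{(\alpha)}(u)$), and then sums the series through the sub-convolution property \eqref{sub10}, obtaining \eqref{ptx} after recentring $\theta_t(y)-x$ to $y-\chi_t(x)$ via \eqref{flow}. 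One caveat: the paper carries the kernel $\big(g^{(\alpha)}_{t+1}+g^{(\alpha)}_t\big)(\theta_t(y)-x)$ through the whole induction and recentres only at the very end, because \eqref{sub10} is stated for the $\theta$-centred kernels; your $\kappa_t$ is centred at the forward flow from the start, which is harmless but should be said.

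The genuine gap is in your route to \eqref{ptx1}. The paper never passes through a spatial gradient estimate: it proves $|\prt_t\Phi_t|\le Ct^{-1/\alpha}(\cdot)$ directly (estimate \eqref{tphi}), then symmetrizes the time convolution at $t/2$ as in \eqref{44}, so that the $t$-derivative always falls on a factor whose time argument is $\ge t/2$, and runs an induction giving \eqref{phikd} for all $\Phi^{\star k}$; the same re-organization of \eqref{p10} then yields \eqref{ptx1} by term-by-term differentiation. Your alternative --- gradient bound on $p$ plus the backward equation $\prt_tp=L_xp$ --- has two unresolved points. First, the bound on $\prt_z\Psi_s$ that your integration by parts requires (``singular only as $s\to0$'') is not automatic: inside every convolution power the $z$-derivative lands on the \emph{leftmost} factor $\Phi_{s-r}(z,\cdot)$, and even after symmetrizing the time integral the derivative in the second piece falls on $\Phi_r$ with $r\in(0,s/2]$, producing $\int_0^{s/2}r^{-1/\alpha}\,dr=\infty$ for $\alpha<1$; so you need a separate induction on $\prt_z\Phi^{\star k}$ (an analogue of \eqref{phikd}, itself requiring integration by parts at each level), which your proposal asserts rather than sets up. Second, invoking $\prt_tp_t=L_xp_t$ to \emph{obtain} the time derivative is circular as written: to know that $p$ defined by \eqref{p10} satisfies the backward equation pointwise, one must already justify that $\prt_t(p^0\star\Psi)$ exists and can be computed --- which is precisely the content of \eqref{ptx1}, and precisely what the splitting \eqref{44} delivers in the paper. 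Your plan could likely be completed (the gradient-plus-equation route is classical), but the missing induction for $\prt_z\Psi$ and the justification of the backward equation are exactly the technical heart, and they are absent.
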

 \begin{proof} First we evaluate $\Phi_t(x,y)$.    If it is not stated otherwise, we assume in all estimates obtained below that $t\in (0,T]$ for some  $T>0$, and $x,y\in \real$.  Observe that $g^{(\alpha,C_\pm)}\in C_b^2(\real)$. Indeed, this  property  easily follows from the Fourier inversion formula and the expression for the characteristic function. It is known that  $g_t^{(\alpha,C_\pm)}(y-x)$ is the fundamental solution to $\prt_t -L^{(\alpha,C_\pm)}$, where $L^{(\alpha,C_\pm)}$ denotes the generator of the process $Z^{(\alpha,C_\pm)}$:
\begin{equation}\label{la}
L^{(\alpha,C_\pm)}f(x)=\int_\real \big( f(x+u)-f(x)\big) \mu^{(\alpha,C_\pm)}(du), \quad f\in C^2_b(\real).
\end{equation}
Since
$$
(\prt_t-L_x^{(\alpha,C_\pm)})g_t^{(\alpha,C_\pm)}(y-x)=0,
$$
we have
$$\ba
\prt_tp_t^0(x,y)&=\left[\prt_tg_t^{(\alpha,C_\pm)}(w)+\frac{\prt_t\theta_t(y)}{t^{2/\alpha} } (g^{(\alpha,C_\pm)})'\left(\frac{w}{t^{1/\alpha}} \right)\right]\Big|_{w=\theta_t(y)-x}
\\&=\left[\frac{1}{t^{1/\alpha} } (L^{(\alpha,C_\pm)} g^{(\alpha,C_\pm)})\left(\frac{w}{t^{1/\alpha}} \right)+\frac{\prt_t\theta_t(y)}{t^{2/\alpha} } (g^{(\alpha,C_\pm)})'\left(\frac{w}{t^{1/\alpha}} \right)\right]\Big|_{w=\theta_t(y)-x},
\ea
$$
where in the last identity we  used the scaling property of  $g_t^{(\alpha,C_\pm)}$ and the fact that $L^{(\alpha,C_\pm)}$ is a homogeneous operator of  order $1/\alpha$. Next, by the very definition of  $L $ and $ L^{(\alpha,C_\pm)}$ we get
\begin{equation*}
\begin{split}
L_x&p_t^0(x,y)=\left[\frac{1}{t^{1/\alpha} } (L^{(\alpha,C_\pm)} g^{(\alpha)})\left(\frac{w}{t^{1/\alpha}} \right)-\frac{b(x)}{t^{2/\alpha} } (g^{(\alpha,C_\pm)})'\left(\frac{w}{t^{1/\alpha}} \right)\right]\Big|_{w=\theta_t(y)-x}\\
&+ \left[ \int_{|u|\geq 1}
\left(\frac{1}{t^{1/\alpha} } g^{(\alpha,C_\pm)}\left(\frac{w-u}{t^{1/\alpha}} \right)-
\frac{1}{t^{1/\alpha} } g^{(\alpha,C_\pm)}\left(\frac{w}{t^{1/\alpha}} \right)\right)\big(m(u)du-m^{(\alpha,C_\pm)}(du)\big)\right]\Big|_{w=\theta_t(y)-x}.
\end{split}
\end{equation*}
Therefore, using the relation  $\prt_t\theta_t(y)=-b(\theta_t(y))$, we get
\begin{equation}\label{phi-n10}
\begin{split}
\Phi_t(x,y)&=\big(L_x - \partial_t\big)p_t^0(x,y)=\frac{b(x)-b(\theta_t(y))}{t^{2/\alpha} } (g^{(\alpha,C_\pm)})'\left(\frac{\theta_t(y)-x}{t^{1/\alpha}} \right)\\
&+\frac{1}{t^{1/\alpha} }\int_{|u|\geq 1}
\left( g^{(\alpha,C_\pm)}\left(\frac{\theta_t(y)-x-u}{t^{1/\alpha}} \right)-
 g^{(\alpha,C_\pm)}\left(\frac{\theta_t(y)-x}{t^{1/\alpha}} \right)\right)\big(m(u)-m^{(\alpha,C_\pm)}(u)\big)\, du\\
&=: \Phi_t^1(x,y)+\Phi_t^2(x,y).
\end{split}
\end{equation}
Further, we  give the  bounds for the absolute values of $\Phi^1_t(x,y)$, $\Phi^2_t(x,y)$, and $\Phi_t(x,y)$. In what follows, $C$ denotes a generic constant, whose value might  be  different from place to place. One has
\begin{equation}\label{ga9}
g^{(\alpha,C_\pm)}(x)\leq C g^{(\alpha)}(x), \quad \quad x\in \real,
\end{equation}
\begin{equation}\label{ga91}
\big|(g^{(\alpha,C_\pm)})'(x)\big|\leq C (1+|x|)^{-1} g^{(\alpha)}(x),\quad x\in \real,
\end{equation}
\begin{equation}\label{ga92}
\big|(g^{(\alpha,C_\pm)})''(x)\big|\leq C (1+|x|)^{-2} g^{(\alpha)}(x),\quad x\in \real.
\end{equation}
Since the argument used in the proof of  (\ref{ga9}) -- (\ref{ga92}) is quite standard (see e.g.  \cite[Appendix A]{KK14}), we omit the details.

By \eqref{ga91} and the Lipschitz continuity of $b(\cdot)$ we have
$$
|\Phi_t^1(x,y)|\leq\frac{C|x-\theta_t(y)|}{t^{2/\alpha} }\left| (g^{(\alpha,C_\pm)})'\left(\frac{\theta_t(y)-x}{t^{1/\alpha}} \right)\right|\leq \frac{C}{t^{1/\alpha} } g^{(\alpha)}\left(\frac{\theta_t(y)-x}{t^{1/\alpha}} \right)=Cg_t^{(\alpha)}(\theta_t(y)-x).
$$

To get the estimate for $|\Phi_t^2(x,y)|$, we first observe that
$$
\big|m(u)-m^{(\alpha,C_\pm)}(u)\big|I_{|u|\geq 1} \leq C g^{(\alpha)}(u),
$$
which implies
$$\ba
|\Phi_t^2(x,y)|&\leq \frac{C}{t^{1/\alpha} }\int_{|u|\geq 1}
g^{(\alpha,C_\pm)}\left(\frac{\theta_t(y)-x-u}{t^{1/\alpha}} \right)
 g^{(\alpha)}(u)\, du+\frac{C}{t^{1/\alpha} }g^{(\alpha,C_\pm)}\left(\frac{\theta_t(y)-x}{t^{1/\alpha}} \right).
\ea
$$
Taking into account \eqref{ga9}, we deduce that
$$
|\Phi_t^2(x,y)|\leq C \big( g_t^{(\alpha)}* g_1^{(\alpha)}+ g_t^{(\alpha)}\big)(\theta_t(y)-x)= C \big( g_{t+1}^{(\alpha)}(x)+ g_t^{(\alpha)}(x)\big)(\theta_t(y)-x).
$$
Combining the estimates for $\Phi_t^1(x,y)$ and $\Phi_t^2(x,y)$, we get
\begin{equation}\label{phi-n30}
|\Phi_t(x,y)| \leq C  \big(g_{t+1}^{(\alpha)}+ g_t^{(\alpha)}\big)(\theta_t(y)-x).
\end{equation}

Our next step is to estimate the convolution powers of $\Phi$.
It is shown in \cite[Appendix B]{KK14}, that the kernel $g_t^{(\alpha)}(\theta_t(y)-x)$ possess the following \emph{sub-convolution property}:
\begin{equation}\label{sub10}
\int_\real g_{t-s}^{(\alpha)}\big(\theta_{t-s}(z)-x\big)g_s^{(\alpha)}\big(\theta_s(y)-z\big)dz \leq C g_t^{(\alpha)}(\theta_t(y)-x).
\end{equation}
By this property   we get
\begin{equation}\label{sub20}
\begin{split}
\int_\real g_{t+1-s}^{(\alpha)}\big(\theta_{t-s}(z)-x\big)g_s^{(\alpha)}\big(\theta_s(y)-z\big)dz &\leq C g_{t+1}^{(\alpha)}(\theta_t(y)-x), \\
\int_{\real} g_{t-s+1}^{(\alpha)}(\theta_{t-s}(z)-x)g_{s+1}^{(\alpha)}(\theta_s(y)-z)dz &\leq C g_{t+2}^{(\alpha)}(\theta_t(y)-x)\leq C g_{t+1}^{(\alpha)}(\theta_t(y)-x),
\end{split}
\end{equation}
where in the last line we used that $g_2^{(\alpha)}\leq C g_1$, and therefore $g_{t+2}^{(\alpha)}=g_{t}^{(\alpha)}*g_{2}^{(\alpha)}\leq Cg_{t+2}^{(\alpha)}$.
Having these estimates, we deduce in the same way as in \cite[Section 3]{KK14} that
\begin{equation}\label{phik}
|\Phi_t^{\star k} (x,y)|\leq\frac{ C_0(C t)^{k-1}}{k!} \big(g_{t+1}^{(\alpha)}+ g_t^{(\alpha)}\big)(\theta_t(y)-x).
\end{equation}
Therefore, the   series \eqref{Psi} converges absolutely for $(t,x,y)\in (0,\infty)\times \real\times \real$,  and
$$
|\Psi_t(x,y)|\leq C \big(g_{t+1}^{(\alpha)}+ g_t^{(\alpha)}\big)(\theta_t(y)-x).
$$
Applying once again the sub-convolution property \eqref{sub10}, we see that the convolution $p^0\star \Psi$ is well defined, and
$$
|\big(p^0\star \Psi\big)_t(x,y)|\leq C  \big(g_{t+1}^{(\alpha)}+ g_t^{(\alpha)}\big)(\theta_t(y)-x).
$$
Thus, the expression \eqref{p10} for $p_t(x,y)$  is well defined for any $(t,x,y)\in (0,\infty)\times \real\times \real$, and
$$
|p_t(x,y)|\leq C  \big(g_{t+1}^{(\alpha)}+ g_t^{(\alpha)}\big)(\theta_t(y)-x).
$$
Finally, to get \eqref{ptx} we use the following inequalities, which were proved in \cite[Appendix B]{KK14}:
\be\label{flow}
c |\theta_t(y)-x|\leq |\chi_t(x)-y|\leq C |\theta_t(y)-x|.
\ee
Since  for any constant $c>0$ we have $g_t^{(\alpha)}(x)\asymp g_t^{(\alpha)}(c x)$  for any $t\in (0,T]$, $x,y\in \real$, this completes the proof of \eqref{ptx}.

Our final step is to use representation \eqref{p10} in order to find the bounds for $\partial_t p_t(x,y)$. Since $p_t^0(x,y)$ and $\Phi_t(x,y)$ are given explicitly, it is  straightforward  to show that these functions are differentiable with respect to  $t$, and
to check using \eqref{ga9} -- \eqref{ga92} that
\begin{equation}\label{p20}
\big| \prt_t p_t^0(x,y)|\leq C t^{-1/\alpha} g_t^{(\alpha)}(\theta_t(y)-x),
\end{equation}
 \begin{equation}\label{tphi}
 |\prt_t \Phi_t(x,y)|\leq C   t^{-1/\alpha}\big( g_{t+1}^{(\alpha)}+ g_t^{(\alpha)}\big)(\theta_t(y)-x).
 \end{equation}

 To show that the convolution powers $\Phi_t^{\star k}(x,y)$ are differentiable in $t$ and to get the upper bounds, we use the following trick. The expression  for $\Phi_t^{\star (k+1)}(x,y)$ can be re-organized as follows:
\begin{equation}\label{44}
\begin{split}
\Phi^{\star (k+1)}_t(x,y)&=\int_0^{t}\int_{\real}\Phi_{t-s}^{\star k}(x,z)\Phi_s(z,y)\,dzds\\
&=\int_0^{t/2}\int_{\real}\Phi_{t-s}^{\star k}(x,z)\Phi_s(z,y)\,dz ds+\int_0^{t/2}\int_{\real}\Phi_{s}^{\star k}(x,z)\Phi_{t-s}(z,y)\,dz ds.
\end{split}
\end{equation}
If  $k=1$, the first line in \eqref{44} does not allow us to differentiate  $\prt_t\Phi^{\star (2)}_t(x,y)$, because the upper bound for  $\prt_t\Phi_{t-s}(x,z)$ has a  non-integrable singularity $(t-s)^{-1/\alpha}$ at the vicinity of the point $s=t$ (recall that $\alpha<1$). However, the identity given by the second line in \eqref{44} does not contain such singularities, and we can  show using  induction that
 for any $k\geq 1$ the function  $\Phi^{\star k}_t(x,y)$ is continuously differentiable  in  $t$, satisfies
\begin{align*}
\prt_t\Phi^{\star (k+1)}_t(x,y)&=\int_0^{t/2}\int_{\Re^d}(\prt_t\Phi^{\star k})_{t-s}(x,z)\Phi_s(z,y)\,dz ds+
\int_0^{t/2}\int_{\Re^d}\Phi_{s}^{\star k}(x,z)(\prt_t\Phi)_{t-s}(z,y)\,dz ds\\
&\quad +\int_{\Re^d}\Phi_{t/2}^{\star k}(x,z)\Phi_{t/2}(z,y)\, dz.
\end{align*}
and possesses the bound
\begin{equation}\label{phikd}
|\partial_t\Phi_t^{\star k} (x,y)|\leq \frac{ C_0(Ct)^{k-1} t^{-1/\alpha}}{k!} \big(g_{t+1}^{(\alpha)}+ g_t^{(\alpha)}\big) (\theta_t(y)-x), \quad\quad k\geq 1.
\end{equation}
Since the proof is completely analogous to the proof of  \cite[Lemma~7.3]{KK14},  we omit the details.

From \eqref{phikd} we derive the following  bound for the derivative of $\Psi_t(x,y)$:
\begin{equation}\label{phikd2}
|\partial_t\Psi_t^{\star k} (x,y)|\leq C t^{-1/\alpha}  \big(g_{t+1}^{(\alpha)}+ g_t^{(\alpha)}\big) (\theta_t(y)-x).
\end{equation}
Re-organizing representation (\ref{p10}) in the same way as  (\ref{44}), we get
 $$
p_t(x,y)
=p_t^0(x,y)+\int_0^{t/2}\int_{\rdd}p_{t-s}^0(x,z)\Psi_s(z,y)\,dzds+\int_{0}^{t/2}\int_{\rdd}p_{s}^0(x,z)\Psi_{t-s}(z,y)\,dz\, ds.
$$
Using the above representation of $p_t(x,y)$ together with  \eqref{p20} and  \eqref{phikd2}, we derive the existence of the continuous  derivative $\prt_t p_t(x,y)$, which satisfies the  inequality
$$
|\prt_t p_t(x,y)|\leq C   t^{-1/\alpha}\big( g_{t+1}^{(\alpha)}+ g_t^{(\alpha)}\big)(\theta_t(y)-x).
$$
Using estimates (\ref{flow}) in the same way as  we did that in the proof of (\ref{ptx}), we can change the argument $\theta_t(y)-x$ in the right hand side of the above estimate  to $y-\chi_t(x)$, which completes the proof of (\ref{ptx1}).
 \end{proof}

\section{Application: the price of an occupation time option}\label{s4}

In this section, we consider an \emph{occupation time option} (see \cite{Linetsky}),  with the price of the option depending on the time spent by  an asset price process  in a  given set.  Comparing to the standard barrier options, which are activated or cancelled when the asset price process hits some definite  level (barrier), the payoff of the occupation time option depends on the time during which  the asset price process stays below or above such a barrier.

For instance, for the strike price $K$, the barrier level $L$ and the knock-out rate $\rho$, the payoff of a down-and-out call occupation time option equals
$$
\exp \left( - \rho \int_0^T \mathbb{I}_{\{S_t \leq L\}} dt \right) (S_T - K)_+,
$$
and the price $\mathbf{C}(T)$ of the option is defined as
$$
\textbf{C}(T) = \exp (-rT) E \left[ \exp \left( - \rho \int_0^T \mathbb{I}_{\{S_t \leq L\}} dt \right) (S_T - K)_+ \right]
$$
where $r$ is the risk-free interest rate (see \cite{Linetsky}).

Assume  that the price of an asset $S=\{S_t, t\geq 0\}$  is of the form
$$
S_t = S_0 \exp(X_t),
$$
 where $X$ is the Markov process  studied in previous sections. Then the time spent by the process $S$ in a set $J\subset \mathbb{R}$ equals to the time spent by $X$ in the set $J'=\log J$.

Let us approximate  the price $\textbf{C}(T)$ of our  option by
$$
\textbf{C}_n(T) = \exp (-rT) E \left[ \exp \left( - \rho T/n \sum_{k=0}^{n-1} \mathbb{I}_{\{S_{kT/n} \leq L\}} dt \right) (S_T - K)_+ \right].
$$
Then using the results from the previous sections we can get the control on  the accuracy of such an approximation.

First we apply Theorem \ref{t1} and derive the strong approximation rate.

 \begin{prop}\label{p41}
Suppose that the process  $X$ satisfies condition  \textbf{X}, and assume that there exists $\lambda>1$ such that $G(\lambda):= E \exp (\lambda X_T) = E (S_T)^\lambda < + \infty$.

Then
$$
\Big|\textbf{C}_n(T)- \textbf{C}(T)\Big| \leq \exp(-rT) \rho G(\lambda)^{1/\lambda} C_{T,\lambda/(\lambda-1)}  (D_{T,\beta} (n))^{1/2},
$$
with  constants $C_{T,\lambda/(\lambda-1)}$ and $D_{T,\beta} (n)$  are given by \eqref{DC}.
\end{prop}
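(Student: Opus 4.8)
The plan is to reduce the statement to a direct application of Theorem \ref{t1}. First I would observe that, since $S_t=S_0\exp(X_t)$, the event $\{S_t\leq L\}$ coincides with $\{X_t\leq \log(L/S_0)\}$, so that, setting $h(x)=\mathbb{I}_{\{x\leq \log(L/S_0)\}}$, we have
$$
\int_0^T\mathbb{I}_{\{S_t\leq L\}}\,dt=I_T(h),\qquad \frac{T}{n}\sum_{k=0}^{n-1}\mathbb{I}_{\{S_{kT/n}\leq L\}}=I_{T,n}(h).
$$
In particular $h$ is a bounded indicator with $\|h\|=1$, condition \textbf{X} applies verbatim, and $I_T(h),I_{T,n}(h)\geq 0$. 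Writing out the definitions of $\textbf{C}(T)$ and $\textbf{C}_n(T)$, their difference becomes
$$
\textbf{C}(T)-\textbf{C}_n(T)=\exp(-rT)\,E\Big[\big(e^{-\rho I_T(h)}-e^{-\rho I_{T,n}(h)}\big)(S_T-K)_+\Big].
$$

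Next I would exploit the Lipschitz property of the map $u\mapsto e^{-\rho u}$ on $[0,\infty)$: since its derivative $-\rho e^{-\rho u}$ is bounded in modulus by $\rho$ for $u\geq 0$, and both $I_T(h)$ and $I_{T,n}(h)$ are nonnegative, we obtain
$$
\big|e^{-\rho I_T(h)}-e^{-\rho I_{T,n}(h)}\big|\leq \rho\,\big|I_T(h)-I_{T,n}(h)\big|.
$$
Inserting this bound and applying the H\"older inequality with the conjugate exponents $p=\lambda/(\lambda-1)$ and $\lambda$ gives
$$
\big|\textbf{C}(T)-\textbf{C}_n(T)\big|\leq \exp(-rT)\,\rho\,\Big(E\big|I_T(h)-I_{T,n}(h)\big|^{p}\Big)^{1/p}\Big(E(S_T-K)_+^{\lambda}\Big)^{1/\lambda}.
$$

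Finally I would estimate the two factors separately. For the payoff factor I would use $0\leq (S_T-K)_+\leq S_T$, which yields $\big(E(S_T-K)_+^{\lambda}\big)^{1/\lambda}\leq \big(E S_T^{\lambda}\big)^{1/\lambda}=G(\lambda)^{1/\lambda}$; this is precisely where the moment hypothesis $G(\lambda)<\infty$ enters, guaranteeing the integrability needed to run H\"older. For the other factor I would invoke Theorem \ref{t1} with this value of $p$ and with $\|h\|=1$, bounding it by $C_{T,\lambda/(\lambda-1)}(D_{T,\beta}(n))^{1/2}$. Multiplying the three factors gives exactly the asserted inequality. The argument is essentially a packaging of Theorem \ref{t1}, so no step is genuinely hard; the only points requiring care are the nonnegativity of the occupation functionals (which makes the Lipschitz constant of $e^{-\rho u}$ equal to $\rho$ rather than growing) and the choice of the conjugate exponent $\lambda/(\lambda-1)$, dictated by matching it against the available $\lambda$-moment of the payoff.
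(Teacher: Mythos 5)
Your proof is correct and follows essentially the same route as the paper: reduce to $I_T(h)$, use the elementary bound $|e^{-a}-e^{-b}|\leq |a-b|$ for nonnegative arguments, apply H\"older with exponents $\lambda$ and $\lambda/(\lambda-1)$, and invoke Theorem \ref{t1}. The only differences are cosmetic --- the paper absorbs $\rho$ into $h$ (taking $\|h\|=\rho$) rather than using the Lipschitz constant of $u\mapsto e^{-\rho u}$, and applies H\"older before the Lipschitz bound rather than after; your threshold $\log(L/S_0)$ is in fact slightly more careful than the paper's $\log L$.
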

\begin{proof} The proof is a simple corollary of Theorem \ref{t1}. Denote $h(x)=\rho \mathbb{I}_{x\leq \log L}$. Then keeping the notation of Section \ref{s2} we get
$$
\textbf{C}(T)=e^{-rT}Ee^{-I_T(h)}(S_T- K)_+, \quad \textbf{C}_n(T)=e^{-rT}Ee^{-I_{T,n}(h)}(S_T- K)_+.
$$
By the H\"older inequality with $p=\lambda$ and  $q=\lambda/(\lambda-1)$,
$$
\Big|\textbf{C}_n(T)- \textbf{C}(T)\Big| \leq e^{-rT} \Big(E(S_T- K)_+^\lambda\Big)^{1/\lambda}\left(E\left|e^{-I_T(h)}-e^{-I_{T,n}(h)}\right|^{\lambda/(\lambda-1)}\right)^{(\lambda-1)/\lambda}.
$$
Since for positive $a$ and $b$ we have  $|e^{-a}-e^{-b}|\leq |a-b|$, then
$$
\Big|\textbf{C}_n(T)- \textbf{C}(T)\Big| \leq e^{-rT} G(\lambda)^{1/\lambda} \left(E\left|I_T(h)-I_{T,n}(h)\right|^{\lambda/(\lambda-1)}\right)^{(\lambda-1)/\lambda},
$$
and thus the required statement follows directly from Theorem~\ref{t1} with $p=\lambda/(\lambda-1)$.
\end{proof}

We  also can  control the accuracy of the approximation using the weak rate bound from Theorem \ref{t2}. Observe that the bound given below is  sharper  than those  obtained in the previous proposition precisely  when $\lambda>2$.

\begin{prop} Under the assumptions of Proposition~\ref{p41}, we have
$$
\Big|\textbf{C}_n(T)- \textbf{C}(T)\Big| \leq 2^{\beta\vee 2+1}\max\{B_{T,X} \rho T^2 (1+\rho T) \exp(\rho T), G(\lambda) \} \exp (-rT)  \widetilde{D}_{T, \beta} (n),
$$
where
$$
\widetilde{D}_{T,\beta} (n) =
\begin{cases}
n^{-(1-1/\lambda)} \log n,& \beta=1,\\
\max\left(1,\frac{T^{1- \beta}}{\beta-1}\right) n^{-1/ \beta (1-1/\lambda)},& \beta>1.
\end{cases}
$$

\end{prop}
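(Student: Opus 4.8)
The plan is to mirror the proof of Proposition~\ref{p41}, but to replace the H\"older splitting followed by Theorem~\ref{t1} with a direct application of the weak-rate bound from Corollary~\ref{cor1}. First I would record the representation
$$
\textbf{C}(T)=e^{-rT}\Ee\,\varphi(I_T(h))\,(S_T-K)_+,\qquad
\textbf{C}_n(T)=e^{-rT}\Ee\,\varphi(I_{T,n}(h))\,(S_T-K)_+,
$$
with $h(x)=\rho\,\mathbb{I}_{x\le\log L}$ and $\varphi(u)=e^{-u}$. The function $\varphi$ is entire, and since $\varphi^{(m)}(0)=(-1)^m$ we have $|\varphi^{(m)}(0)/m!|\le 1/m!\le R_\varphi^{-m}$ for any $R_\varphi\ge1$; so the hypotheses of Corollary~\ref{cor1} hold with $D_\varphi=1$ and $R_\varphi$ arbitrary. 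This is where I must be slightly careful: the payoff $(S_T-K)_+$ is \emph{not} bounded, so Corollary~\ref{cor1} does not apply verbatim with $f(X_T)=(S_T-K)_+$. The resolution is to peel off the payoff by H\"older's inequality \emph{before} invoking the weak estimate, exactly the trick that produces the exponent $1-1/\lambda$ in $\widetilde D_{T,\beta}(n)$.

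Concretely, I would proceed as follows. Since $\|h\|=\rho$, choose $R_\varphi$ so that $T\|h\|=\rho T<R_\varphi$; the cleanest choice is to let $R_\varphi\to\infty$, or to fix any $R_\varphi>\rho T$ and track the resulting constant. The weak bound of Theorem~\ref{t2} for the $k$-th term, summed through the Taylor series of $\varphi$, gives the pointwise-in-$f$ estimate of Corollary~\ref{cor1}; but because the payoff is unbounded I instead apply H\"older with exponents $\lambda$ and $q=\lambda/(\lambda-1)$ to write
$$
\bigl|\textbf{C}_n(T)-\textbf{C}(T)\bigr|
\le e^{-rT}\bigl(\Ee(S_T-K)_+^{\lambda}\bigr)^{1/\lambda}
\Bigl(\Ee\bigl|\varphi(I_T(h))-\varphi(I_{T,n}(h))\bigr|^{q}\Bigr)^{1/q}.
$$
The first factor is bounded by $G(\lambda)^{1/\lambda}$ since $(S_T-K)_+^\lambda\le S_T^\lambda=e^{\lambda X_T}$. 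For the second factor I use $|e^{-a}-e^{-b}|\le\max(e^{-a},e^{-b})|a-b|\le e^{\rho T}|a-b|$ (here $a,b\in[0,\rho T]$ because $0\le I_T(h),I_{T,n}(h)\le\|h\|T=\rho T$), reducing it to a bound on $\bigl(\Ee|I_T(h)-I_{T,n}(h)|^{q}\bigr)^{1/q}$.

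At this point the two natural routes diverge, and the exponent $1/\beta(1-1/\lambda)$ signals which one is intended. The factor $n^{-1/\beta(1-1/\lambda)}=\bigl(n^{-1/\beta}\bigr)^{1-1/\lambda}=\bigl(D_{T,\beta}(n)\bigr)^{1-1/\lambda}$ (up to the $\log$ and the $\max$-prefactor) is precisely the \emph{weak} rate $D_{T,\beta}(n)$ raised to the power $1-1/\lambda=1/q$. This tells me the estimate for the $L^q$-norm should come not from Theorem~\ref{t1} (which would give the square root of the strong rate) but from interpolating the weak bound against the crude uniform bound $|I_T(h)-I_{T,n}(h)|\le 2\rho T$: writing $\Ee|\cdot|^{q}\le(2\rho T)^{q-1}\,\Ee|\cdot|$ and applying Proposition~\ref{p1} (the $k=1$, $f\equiv1$ weak estimate) to control $\Ee|I_T(h)-I_{T,n}(h)|$ by a constant times $\rho T\,D_{T,\beta}(n)$, then taking the $1/q$-th power produces exactly $\bigl(D_{T,\beta}(n)\bigr)^{1/q}=\widetilde D_{T,\beta}(n)$ together with the advertised prefactor $B_{T,X}\rho T^2(1+\rho T)e^{\rho T}$. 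The main obstacle, and the only genuinely delicate point, is bookkeeping the constants through the H\"older split, the Lipschitz bound on $\varphi$, and the interpolation so that they collapse into the stated $2^{\beta\vee2+1}\max\{B_{T,X}\rho T^2(1+\rho T)e^{\rho T},\,G(\lambda)\}$; in particular one must use $\Ee|I_T(h)-I_{T,n}(h)|$ rather than its $L^1$-norm naively, justify that the conditional expectation in Proposition~\ref{p1} controls the unconditional one, and verify that the $\max$ with $G(\lambda)$ correctly absorbs both the payoff factor and the rate factor into a single product.
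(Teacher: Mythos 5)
Your overall frame (peel the payoff off, exploit the weak rate, land on $(D_{T,\beta}(n))^{1-1/\lambda}$) identifies the right target, but the step that is supposed to deliver the weak rate is broken. You interpolate via $\Ee|I_T(h)-I_{T,n}(h)|^{q}\le(2\rho T)^{q-1}\,\Ee|I_T(h)-I_{T,n}(h)|$ and then invoke Proposition~\ref{p1} to bound $\Ee|I_T(h)-I_{T,n}(h)|$ by a constant times $D_{T,\beta}(n)$. Proposition~\ref{p1} does not say that: it bounds $\big|\Ee_x I_T(h)-\Ee_x I_{T,n}(h)\big|$, with the absolute value \emph{outside} the expectation. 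The quantity you need, $\Ee|I_T(h)-I_{T,n}(h)|$, is exactly the strong $L_1$-error, and the only bound the paper provides for it is Theorem~\ref{t1}, which gives the square root $(D_{T,\beta}(n))^{1/2}$ --- and this square-root barrier is genuine, not an artifact of the method: for occupation-time functionals of a diffusion (take $h=\I_A$) the $L_1$-error is known to be of order $n^{-3/4}$, strictly larger than the weak rate $n^{-1}\log n$, so no argument can upgrade Proposition~\ref{p1} to an $L_1$ statement; the whole point of the weak bounds is the cancellation inside the expectation. More structurally, once you apply H\"older first, every subsequent expectation carries an absolute value inside, and all of the weak estimates (Proposition~\ref{p1}, Theorem~\ref{t2}, Corollary~\ref{cor1}) become unusable, since they concern differences of expectations only. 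Running your scheme with the correct $L_1$ input from Theorem~\ref{t1} returns the rate $(D_{T,\beta}(n))^{1/2}$, i.e., nothing beyond Proposition~\ref{p41}, whereas the stated bound must beat it for $\lambda>2$.

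The missing idea is truncation of the payoff rather than H\"older. The paper replaces $(S_T-K)_+$ by $(S_T-K)_+\wedge N$, so that Theorem~\ref{t2}, summed through the Taylor series of the exponential exactly as in Corollary~\ref{cor1}, applies directly with a bounded $f$, $\|f\|\le N$ --- the difference-of-expectations structure is preserved --- yielding the term $2^{\beta\vee2}B_{T,X}\rho N T^{2}(1+\rho T)e^{\rho T-rT}D_{T,\beta}(n)$. The two truncation errors are bounded by $2G(\lambda)N^{1-\lambda}e^{-rT}$ using $(S_T-K)_+-(S_T-K)_+\wedge N\le S_T\I_{\{S_T>N\}}\le S_T^{\lambda}N^{1-\lambda}$, where the moment assumption $G(\lambda)<\infty$ enters. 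Choosing $N=n^{1/(\beta\lambda)}$ balances $N\,D_{T,\beta}(n)$ against $N^{1-\lambda}$ and produces exactly $\widetilde D_{T,\beta}(n)$ together with the constant $2^{\beta\vee2+1}\max\{B_{T,X}\rho T^{2}(1+\rho T)e^{\rho T},\,G(\lambda)\}$. So the exponent $1-1/\lambda$ you correctly read off the statement arises from this optimization over the truncation level $N$, not from interpolating a (nonexistent) $L_1$ bound at the weak rate.
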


\begin{proof}

For some $N>0$ denote
$$\ba
&\textbf{C}^N(T) = \exp (-rT) E \left[ \exp \left( - \rho \int_0^T \mathbb{I}_{\{S_t \leq L\}} dt \right)  ((S_T - K)_+ \wedge N) \right],\\&
\textbf{C}^N_n(T) = \exp (-rT) E \left[ \exp \left( - \rho T/n \sum_{k=0}^{n-1} \mathbb{I}_{\{S_{kT/n} \leq L\}} dt \right)  ((S_T - K)_+ \wedge N) \right].
\ea$$

Then
\be
\label{exmpl_bound}
\Big|\textbf{C}_n(T)- \textbf{C}(T)\Big| \leq \Big|\textbf{C}^N_n(T)- \textbf{C}^N(T)\Big| + \Big|\textbf{C}(T)- \textbf{C}^N(T)\Big| + \Big|\textbf{C}_n(T)- \textbf{C}^N_n(T)\Big|.
\ee

We estimate each term separately.

 Using  Theorem \ref{t2} and the Taylor expansion of the exponent, we derive
\begin{align*}
\Big|\textbf{C}^N_n(T)- \textbf{C}^N(T)\Big| &\leq 2^{\beta\vee 2}B_{T,X} NT\exp (-rT) \sum_{k=1}^{\infty} \frac{\rho^k}{k!} k^2 T^{k}  D_{T,\beta} (n)\\
&= 2^{\beta\vee 2}B_{T,X} \rho N T^2 (1+\rho T) \exp(\rho T-rT) D_{T,\beta} (n).
\end{align*}
For the last two terms we get
\begin{align*}
\Big|\textbf{C}(T)- \textbf{C}^N(T)\Big| + \Big|\textbf{C}_n(T)&- \textbf{C}^N_n(T)\Big| \leq 2 \exp (-rT) E\left[(S_T - K)_+ -(S_T - K)_+ \wedge N \right]
\\
& \leq 2 \exp (-rT) E [S_T \mathbb{I}_{\{S_T>N\}}] \\
&= 2 \exp (-rT) E\left[ \frac{S_T N^{\lambda-1}\mathbb{I}_{\{S_T>N\}}}{N^{\lambda-1}} \right] \leq \frac{2G(\lambda)}{N^{\lambda-1}} \exp (-rT).
\end{align*}
To complete the proof, put $N = n^{1/(\beta \lambda)}$.

\end{proof}


\begin{thebibliography}{99}


\bibitem{BGR14} K.\ Bogdan, T.\  Grzywny, M.\ Ryznar.  Density and tails of unimodal convolution semigroups.  \emph{J. Func. An.} \textbf{ 266(6)} (2014).  3543--3571.

\bibitem{Dynkin}
  E.\ B.\ Dynkin.   \emph{Markov processes}. Fizmatgiz, Moscow, 1963 (in Russian).

\bibitem{kul-gan}  Iu.\ Ganychenko, A. Kulik,   \emph{
Rates of approximation of nonsmooth integral-type functionals of Markov processes},
Modern Stochastics: Theory and Applications, \textbf{2} (2014), 117--126.


\bibitem{Gobet}
E.\ Gobet, C.\ Labart.
Sharp estimates for the convergence of the density of the Euler scheme in small time,
\emph{Elect. Comm. in Probab.} \textbf{13} (2008), 352--363.

\bibitem{Guerin}
 H.\ Guerin, J.-F. \ Renaud.
Joint distribution of a spectrally negative Levy process and its occupation time, with step option pricing in view.  Available at \href{http://arxiv.org/pdf/1406.3130v1.pdf}{http://arxiv.org/pdf/1406.3130v1.pdf}



\bibitem{KS13a} K.\ Kaleta, P.\ Sztonyk.  Small time sharp bounds for kernels of convolution semigroups. To appear in \emph{J. Anal. Math. }

\bibitem{KS13b} K.\ Kaleta, P.\ Sztonyk. Estimates of transition densities and their derivatives for jump L\'evy processes. To appear in \emph{J. Math. Anal. Appl.}

\bibitem{KK12a}  V.\ Knopova, A.\ Kulik. Intrinsic small time estimates for distribution densities of L\'evy processes.  \emph{Random Op. Stoch. Eq.} \textbf{21(4)}  (2013), 321--344.


\bibitem{K13} V. Knopova.  Compound kernel estimates for the transition probability density of a L\'evy process in $\mathbb{R}^n$.   \emph{Th. Probab. Math. Stat.} \textbf{89} (2014), 57--70.


\bibitem{KK14}
V.\ Knopova, A.\ Kulik. Parametrix construction of the transition probability density of the solution to an SDE driven by $\alpha$-stable noise. Preprint 2014.  Available at \href{http://arxiv.org/pdf/1412.8732v1.pdf}{http://arxiv.org/pdf/1412.8732v1.pdf}


\bibitem{Kohatsu-Higa}
A. Kohatsu-Higa, A. Makhlouf, H.L. Ngo.
Approximations of non-smooth integral type functionals of one dimensional diffusion precesses. \emph{Stoch. Proc.  Appl.} \textbf{124(5)} (2014),  1881--1909.

\bibitem{KR15}  T.\ Kulczycki, M.\ Ryznar, Gradient estimates of harmonic functions and transition densities for L\'evy
processes. To appeat in \emph{ Trans. Amer. Math. Soc.}

\bibitem{kul-gan2}
 A.\ Kulik, Iu.\ Ganychenko.
On accuracy of weak approximations of integral-type functionals of Markov processes.  Preprint 2015. (In Ukrainian)

\bibitem{Linetsky}
V.\ Linetsky.
Step options. Math. Finance, \textbf{9(1)} (1999), 55--96.


\bibitem{M12} A.\ Mimica. Heat kernel upper estimates for symmetric jump processes with small jumps
of high intensity.  \emph{Poten. Anal.} \textbf{ 36(2)} (2012), 203--222.


\bibitem{PT69} W.E. Pruitt, S.J. Taylor, The potential kernel and hitting probabilities for the general stable process in $\mathbb{R}^n$.
\emph{Trans. Am. Math. Soc.} \textbf{ 146} (1969), 299--321.

\bibitem{RS10} J. Rosinski and  J.Singlair,  Generalized tempered stable processes. In: \emph{Stability in Probability, Ed. J.K. Misiewicz}, 90, Banach Center Publ.  (2010), 153--170.



\bibitem{St10a} P. Sztonyk. Estimates of tempered stable densities. \emph{J. Theor. Probab.} \textbf{23(1)}  (2010)
 127--147.

\bibitem{St10b} P. Sztonyk, Transition density estimates for jump L\'evy processes. \emph{Stoch. Proc. Appl.} \textbf{121}  (2011), 1245--1265.


\bibitem{Sznitman}
A. Sznitman. \emph{ Brownian motion, obstacles and random media}. Springer, Berlin, 1998.


\bibitem{W07} T. Watanabe, Asymptotic estimates of multi-dimensional stable densities and their applications. \emph{ Trans.
Am. Math. Soc.} \textbf{359(6)} (2007), 2851--2879.

\end{thebibliography}
\end{document}